\title{Quotients of degenerate Sklyanin algebras}
\author{Kevin De Laet}
\address{Department of Mathematics, University of Antwerp \\ 
 Middelheimlaan 1, B-2020 Antwerp (Belgium) \\ {\tt kevin.delaet2@uantwerpen.be}}
\date{}
\tikzset{
  vertice/.style={circle,draw=black},
  decoration={markings,mark=at position 0.5 with {\arrow{>}}}
}
\theoremstyle{plain}
\newcommand{\wis}[1]{{\text{\em \usefont{OT1}{cmtt}{m}{n} #1}}}
\newcommand{\C}{\mathbb{C}}
\newcommand{\N}{\mathbb{N}}
\newcommand{\Z}{\mathbb{Z}}
\newcommand{\A}{\mathbb{A}}
\newcommand{\PP}{\mathbb{P}}
\newtheorem{theorem}{Theorem}[section]
\newtheorem{lemma}[theorem]{Lemma}
\newtheorem{proposition}[theorem]{Proposition}
\newtheorem{corollary}[theorem]{Corollary}
\theoremstyle{definition}
\newtheorem{mydef}[theorem]{Definition}
\DeclareMathOperator{\im}{Im}
\DeclareMathOperator{\Grass}{Grass}
\DeclareMathOperator{\Emb}{Emb}
\numberwithin{equation}{section}
\begin{document}
\sloppy
\begin{abstract}
In this paper it is shown how the Heisenberg group of order 27 can be used to construct quotients of degenerate Sklyanin algebras. These quotients have properties similar to the classical Sklyanin case in the sense that they have the same Hilbert series, the same character series and a central element of degree 3. Regarding the central element of a 3-dimensional Sklyanin algebra, a better way to view this using Heisenberg-invariants is shown.
\end{abstract}
\maketitle
\tableofcontents
\section{Introduction}
The 3-dimensional Sklyanin algebras form an important class of noncommutative graded algebras, as they correspond to the notion of a noncommutative $\PP^2$ following Artin, Tate, Van den Bergh and others (see for example \cite{ATV1} and \cite{ATV2}). These algebras are parametrized by an elliptic curve $\xymatrix{E \ar@{^{(}->}[r] & \PP^2}$ and a point $\tau \in E$. They form the largest class of examples of quadratic 3-dimensional AS-regular algebras, that is, graded algebras of global dimension 3 with relations in degree 2 with excellent homological properties. These AS-regular algebras can be described as the quotient of $ \C\langle x,y,z\rangle$ by the relations
\begin{equation}
\begin{cases}
a yz + b zy + c x^2,\\
a zx + b xz + c y^2,\\
a xy + b yx + c z^2,
\end{cases}
\label{eq:Sklyanin}
\end{equation}
with $[a:b:c]\in \PP^2$ not one of the 12 points of
\begin{eqnarray}
\{[0:0:1],[0:1:0],[1:0:0]\}\cup \{[a:b:c]\in \PP^2 | a^3=b^3=c^3=1\}.
\label{eq:nonreg}
\end{eqnarray}
It was remarked in many early papers (see for example \cite{ASregular}) about these algebras that there was a central element of degree 3, which was somewhat mysterious. In \cite{DeLaetLeBruyn}, a intrinsic presentation of this central element was found. It turns out that the central element gave a connection between the Sklyanin algebra $\mathcal{A}_\tau(E)$ and the algebra $\mathcal{A}_{-2\tau}(E)$ if $E$ is an elliptic curve and $\tau \in E$. This was proved using the concept of superpotentials, as explained in for example \cite{Walton}.
\par The first purpose of this paper is to give a better statement of this theorem in terms of Heisenberg invariants and the degree 3 part of the Koszul dual of a Sklyanin algebra.
\par The second purpose is the study of quotients of the 12 nonregular algebras. In particular, we show that there is a 1-dimensional family of quotients of each of these 12 algebras parametrized by $\C^*$ such that the quotients have Hilbert series $\frac{1}{(1-t)^3}$. In addition, these algebras also have a central element of degree 3, fixed by the Heisenberg group. We also show that for the constructed quotients of the algebra $\C\langle x,y,z\rangle/(x^2,y^2,z^2)$, the $n$th roots of unity give quotients that are finite modules over their center of PI-degree $2n$.
\subsection{Notation} In this article, we use the following notations:
\begin{itemize}
\item $\mathbf{V}(I)$ for $I \subset \C[a_1,\ldots,a_n]$ an ideal is the Zariski-closed subset of $\A^n$ or $\PP^{n-1}$ determined by $I$, it will be clear from the context if the projective or affine variety is used. 
\item $D(I)$ for $I$ an ideal $I \subset \C[a_1,\ldots,a_n]$ is the open subset $\A^n \setminus \mathbf{V}(I)$ or $\PP^{n-1} \setminus \mathbf{V}(I)$, it will be clear from the context if it is an open subset of affine space or of projective space. If $I = (a)$, then we write $D(a)$ for $D(I)$.
\item $\Z_n = \Z/n\Z$ for $n \in \mathbb{N}$.
\item $\Grass(m,n)$ will be the projective variety parametrizing $m$-dimensional vector spaces in $\C^n$.
\item For an algebra $A$ and elements $x,y \in A$, $\{x,y\} = xy + yx$ and $[x,y] = xy-yx$.
\item For $V$ a $n$-dimensional vector space, we set $T(V) = \oplus_{k=0}^\infty V^{\otimes k}$, the tensor algebra over $V$.
\item Every graded algebra $A$ will be positively graded, finitely generated over $\C$ and connected, that is $A_0 = \C$.
\item The group $\wis{SL}_m(p)$ (respectively $\wis{PSL}_m(p)$) is the special linear group (respectively projective special linear group) of degree $m$ over the finite field with $p$ elements.
\item For any vector space $V$, $\C[V] = T(V)/(wv-vw|w,v \in V)$.
\item If $A$ is a connected, finitely generated, positively graded algebra, then the Hilbert series is defined as $H_A(t) = \sum_{k=0}^\infty \dim A_k t^k$.
\item Take a reductive group $G$ and 2 finite dimensional representations $V,W$ of $G$. Then $\Emb_G(V,W)$ is the set of injective linear $G$-maps from $V$ to $W$ up to $G$-isomorphisms of $V$.
\item Modules will always be left modules unless otherwise mentioned.
\end{itemize}

\section{$G$-algebras}
\label{sec:Galg}
This section is a summary of the general theory developed in \cite{DeLaet2}.
\begin{mydef}
Let $G$ be a reductive group. We call a positively graded connected algebra $A$, finitely generated in degree 1, a \textit{$G$-algebra} if $G$ acts on it by gradation preserving automorphisms.
\end{mydef}
This definition implies that there exists a finite dimensional representation $V$ of $G$ such that $T(V)/I \cong A$ with $I$ a graded ideal of $T(V)$, which is itself a $G$-subrepresentation of $T(V)$.
\par One can make quadratic $G$-algebras as follows. Let $V$ be a $G$-representation. Then $V \otimes V$ is also a $G$-representation which decomposes as a summation of simple representations, say $V \otimes V \cong \oplus_{i=1}^m S_i^{a_i}$ where the $S_i$ are distinct simple representations of $G$ and $a_i \geq 0$. A $G$-algebra $A$ is then constructed by taking embeddings of the $S_i$ in $V \otimes V$ as relations of $A$.
\par One can of course do the same for other degrees by taking relations in $T(V)_i = V^{\otimes i} \cong  \oplus_{j=1}^m S_j^{a_j}$ and take different embeddings of the simple representations of $G$ in $V^{\otimes i}$ as relations.
\begin{mydef}
Let $A$ be a $G$-algebra with corresponding ideal $I$ of $T(V)$. We call $B$ a \textit{$G$-deformation} of $A$ up to degree $k$ if $B$ is also a quotient of $T(V)$ such that $\forall 1 \leq i \leq k: A_i \cong B_i$ as $G$-representations. We will call $B$ a $G$-deformation if $\forall i \in \N: A_i \cong B_i$ as $G$-representations.
\end{mydef}
If the relations for $A$ are all of the same degree $k$, then all $G$-deformations up to degree $k$ of $A$ depend on a product of Grassmannians. For example, let $A$ be a quadratic algebra of which we want to find all $G$-deformations up to degree 2. Let $I_2 = \oplus_{i=1}^m S_i^{e_i} \subset V \otimes V = \oplus_{i=1}^m S_i^{a_i}$ with $S_i$ distinct simple representations and $0 \leq e_i \leq a_i$ natural numbers. Then the $G$-deformations up to degree 2 are parametrized by $\Emb_G(\oplus_{i=1}^m S_i^{e_i},\oplus_{i=1}^m S_i^{a_i})=\prod_{i=1}^m \Grass(e_i,a_i)$.
\par In general, the total set of $G$-deformations up to degree $k$ of a $G$-algebra $A = T(V)/I$ are determined by a Zariski closed subset of 
$$
Z_k=\prod_{j=1}^k \prod_{S_i \text{ simple}}\Grass(e_{i,j},a_{i,j})
$$
with $I_j = \oplus_{S_i \text{ simple}} S_i^{e_{i,j}} \subset T(V)_i = \oplus_{S_i \text{ simple}} S_i^{a_{i,j}}$
\begin{mydef}
We say that a variety $Z$ \textit{parametrizes $G$-deformations up to degree $k$} of a $G$-algebra $A$ if $\xymatrix{Z \ar@{^{(}->}[r]^-\phi & Z_k}$ can be embedded in $Z_k$ and the point corresponding to $A$ in $Z_k$ belongs to the image of $\phi$. We say that $Z$ \textit{parametrizes $G$-deformations} of $A$ if $Z$ parametrizes $G$-deformations up to degree $k$ for some $k$ and for each point $x \in Z$ with corresponding algebra $A_x$, we have
$$
\forall i \in \N: (A_x)_i \cong A_i \text{ as $G$-representations}.
$$
\end{mydef}
We will show in the next section that the 3-dimensional Sklyanin algebras are $H_3$-deformations of the polynomial ring $\C[V]$. We first show a computational way to decode how a $G$-algebra decomposes as a $G$-module.

\subsection{Character series}
\label{sec:kos}
Given a $G$-algebra $A$, it is a natural question to ask how $A$ behaves as a $G$-module. As $G$ acts as gradation preserving automorphisms, we have a decomposition
$$
A = \bigoplus_{k=0}^{\infty} \bigoplus_{S \text{ simple}} S^{e_{k,S}}
$$
with almost all $e_{k,S}$ equal to 0. We will only consider the case that $G$ is finite.

\begin{mydef}
Let $G$ be a finite group. The \textit{character series} for an element $g \in G$ and for a $G$-algebra $A$ is a formal sum 
\begin{displaymath}
Ch_A(g,t) = \sum_{n \in \mathbb{Z}} \chi_{A_n}(g) t^n.
\end{displaymath}
\end{mydef}
For example, if $g =  1$, then we have $Ch_A(1,t) = H_A(t)$, the Hilbert series of $A$. As a character of a representation is constant on conjugacy classes, we can represent the decomposition of $A$ in simple $G$-representations as a vector of length equal to the number of conjugacy classes and on the $i$th place the character series $Ch_A(g,t)$ with $g \in C_i$, the $i$th conjugacy class.
\begin{lemma}
Let $V$ be a simple representation of $G$ and let $A$ be a $G$-algebra constructed from $T(V)$. For every element $z$ of the center, we have that $Ch_A(z,t) = H_A(\lambda t)$, where $z$ acts on $V$ by multiplication with $\lambda$.
\label{lem:cent}
\end{lemma}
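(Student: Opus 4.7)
The plan is to use Schur's lemma together with the fact that $A$ is a graded quotient of $T(V)$ by a $G$-stable ideal, so every graded piece $A_n$ is a quotient of $V^{\otimes n}$ as a $G$-representation.

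First I would observe that because $z$ is a central element of $G$ and $V$ is a simple $G$-representation, the operator by which $z$ acts on $V$ commutes with the full $G$-action on $V$. By Schur's lemma this operator is a scalar, which by hypothesis equals $\lambda$. Consequently $z$ acts on the tensor power $V^{\otimes n}$ as multiplication by $\lambda^n$, and this scalar action descends to every $G$-equivariant subquotient of $V^{\otimes n}$.

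Next I would invoke the structure of $A$. By definition, $A = T(V)/I$ for some graded ideal $I \subseteq T(V)$ which is itself a $G$-subrepresentation, so each $A_n$ is a $G$-equivariant quotient of $V^{\otimes n}$. By the previous paragraph, $z$ therefore acts on $A_n$ as the scalar $\lambda^n$, which gives
\begin{equation*}
\chi_{A_n}(z) \;=\; \lambda^n \dim_{\C} A_n.
\end{equation*}

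Finally I would assemble the character series directly:
\begin{equation*}
Ch_A(z,t) \;=\; \sum_{n \in \Z} \chi_{A_n}(z) \, t^n \;=\; \sum_{n \geq 0} \dim_{\C}(A_n)\,(\lambda t)^n \;=\; H_A(\lambda t).
\end{equation*}
There is no real obstacle here; the only point that requires care is that the hypothesis ``$A$ is a $G$-algebra constructed from $T(V)$'' really does force $A_n$ to be a $G$-subquotient of $V^{\otimes n}$ (not merely an abstract $G$-representation of the same dimension), so that the scalar action of $z$ transfers from $V^{\otimes n}$ to $A_n$. Once that is noted, the computation is immediate.
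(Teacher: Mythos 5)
Your proposal is correct and follows essentially the same route as the paper: the paper's proof simply asserts that $z$ acts on $A_k$ by $\lambda^k$ and then sums the resulting character series, which is exactly your computation. Your additional remarks (Schur's lemma for the scalar action on $V$, and the fact that $A_n$ is a $G$-equivariant quotient of $V^{\otimes n}$ so the scalar descends) merely make explicit the steps the paper leaves implicit.
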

\begin{proof}
It follows that in degree $k$ the action of $z$ on $A_k$ is given by multiplication with $\lambda^k$, so the character series for the element $z$ in this case is given by $$Ch_A(z,t)=\sum_{k=0}^\infty \lambda^k \dim A_k t^k = H_A(\lambda t).$$ 
\end{proof}

\section{The finite Heisenberg group of order $27$}
While in previous papers (see \cite{DeLaet} and \cite{DeLaet2}) we needed the finite Heisenberg group of order $p^3$ for any odd prime $p$, we will only consider here the special case $p=3$.
\begin{mydef}
The \textit{Heisenberg group of order 27} is the finite group given by generators and relations
\[
H_3=\langle e_1, e_2 |~[e_1,e_2]\text{ central},~e_1^3=e_2^3=1 \rangle. \]
\end{mydef}
$H_3$ is a central extension of the group $\Z_3 \times \Z_3$ with $\Z_3$,
\begin{eqnarray}
 \xymatrix{1 \ar[r]& \mathbb{Z}_3 \ar[r] & H_3 \ar[r]&\mathbb{Z}_3 \times \mathbb{Z}_3 \ar[r]&  1}.
 \label{eqn:centralext}
\end{eqnarray}
$H_3$ has 9 1-dimensional representations coming from the quotient $H_3/([e_1,e_2]) = \Z_3 \times \Z_3$ and 2 3-dimensional simple representations, corresponding to the  primitive 3rd roots of unity. These 2 representations are defined in the following way: let $\omega $ be a primitive 3rd root of unity and let $V_1=\C^3 = \C x_0 + \C x_1 + \C x_2$. Then the action is defined by
\begin{eqnarray}
e_1 \cdot x_i = x_{i-1},& e_2 \cdot x_i = \omega^i x_i.
\end{eqnarray}
$V^*$ is the representation corresponding to $\omega^2$ and will be denoted by $V_2$. We will use $\chi_{a,b}$ for the character $\xymatrix{H_3 \ar[r]^-{\chi_{a,b}}& \C}$ defined by \begin{eqnarray}
\chi_{a,b}(e_1) = \omega^a, & \chi_{a,b}(e_2) = \omega^b.
\end{eqnarray}
There is an action of $\wis{SL}_2(3)$ on $H_3$ as group automorphisms by the rule
$$
\begin{bmatrix}
A & B \\ C & D
\end{bmatrix}\cdot e_1 = e_1^A e_2^C, 
\begin{bmatrix}
A & B \\ C & D
\end{bmatrix}\cdot e_2 = e_1^B e_2^D.
$$
The central element $[e_1,e_2]$ is fixed by this action. From this it follows that the induced action of $\wis{SL}_2(3)$ on the simple representations fixes $V_1$ and $V_2$.
\subsection{3-dimensional Sklyanin algebras are $H_3$-deformations}
Using the construction of Section \ref{sec:Galg}, we will now show that the 3-dimensional Sklyanin algebras are $H_3$-deformations of $\C[V]$ with $V= V_1$ as defined above.
\par Write $ \mathcal{A}=\C[V] = T(V)/(V\wedge V)$. In order to find all $H_3$-deformations up to degree 2 of $\mathcal{A}$, we need to decompose $V \otimes V$ in $H_3$-representations. A quick calculation shows that $V \otimes V = (V^*)^3$ and that the $H_3$-generators of $V \otimes V$ are given by
\begin{eqnarray}
x_1 x_2 - x_2 x_1, & x_1 x_2 + x_2 x_1, & x_0^2.
\end{eqnarray}
The first generator corresponds to the wedge product $V \wedge V$. Taking another copy of $V^*$ in $V\otimes V$ corresponds to an element of $\wis{Grass}(1,3)$. Given $p=[A:B:C] \in \wis{Grass}(1,3) = \PP^2$, then $p$ determines the quotient $\C\langle x_0,x_1,x_2 \rangle/(I)$ with $I$ generated by the relations
$$
\begin{cases}
A(x_1 x_2 - x_2 x_1) + B(x_1 x_2 + x_2 x_1) + C(x_0^2),\\
A(x_2 x_0 - x_0 x_2) + B(x_2 x_0 + x_0 x_2) + C(x_1^2),\\
A(x_0 x_1 - x_1 x_0) + B(x_0 x_1 + x_1 x_0) + C(x_2^2).
\end{cases}
$$
Putting $a = A+B$, $b = B-A$ and $c=C$, one gets the familiar relations of the 3-dimensional Sklyanin algebras as in equation \ref{eq:Sklyanin}.
\par In particular, the 3-dimensional Sklyanin algebras have the same Hilbert series as the polynomial ring in 3 variables. Using the results of \cite{DeLaet2}, we see that the character series with respect to $H_3$ of any Sklyanin algebra $\mathcal{B}$ is the same as the character series of $\mathcal{A}$. In fact, this is true for all Artin-Schelter regular algebras parametrized by points of $\Emb_{H_3}(V^*,(V^*)^3)$.
\par In addition, the $\wis{SL}_2(3)$ (left) action on $H_3$ induces a (right) action on $\Emb_{H_3}(V^*,(V^*)^3) \cong \PP^2$. This projective representation of $\wis{SL}_2(3)$ has the property that points lying in the same orbit determine isomorphic algebras. In particular, the only non-regular algebras are those lying in the $\wis{SL}_2(3)$-orbit of either the point $[1:0:0]$ or $[0:0:1]$. For more information, see amongst others \cite{abdelgadir2014compact}, \cite{DeLaet}.
\section{Central elements and Heisenberg invariants}
\label{sec:central}
In \cite{DeLaetLeBruyn}, it was proved that there was a connection between the superpotential defining the Sklyanin algebra $\mathcal{A}_{-2\tau}(E)$ and the central element $c_3$ of degree 3 in $\mathcal{A}_{\tau}(E)$. This connection however can better be explained using Heisenberg invariant elements of $V \otimes V \otimes V$.
\par Let $p=[a:b:c]$ and define $S_p = \C \langle x,y,z \rangle /(R_p)$ to be the algebra with relations
$$R_p=\begin{cases}
a yz + b zy + c x^2,\\
a zx + b xz + c y^2,\\
a xy + b yx + c z^2.
\end{cases}
$$
Let $W_p \subset V \otimes V$ be the vector space spanned by these relations, with $V = \C x + \C y +\C z$. Then $W_p \otimes V \cap V \otimes W_p$ is generically a 1-dimensional vector space, generated by
$$
a(zxy+xyz+yzx) + b(yxz+zyx+xzy) + c (x^3+y^3+z^3).
$$
This element is easily seen to be fixed by $H_3$. In turn, any element $g \in \PP((V \otimes V \otimes V)^{H_3})$ determines quadratic relations by taking the cyclic derivatives $\delta_x,\delta_y,\delta_z$ as in \cite{DeLaetLeBruyn}. For AS-regular algebras, $W_p \otimes V \cap V \otimes W_p$ is 1-dimensional, so on the open subset $\PP^2 \setminus D$ with $D = \bigcup_{g \in \wis{SL}_2(3)} g \cdot\{[1:0:0],[0:0:1]\}$, we have an injective morphism
$$
\xymatrix{\PP^2\setminus D \ar[r]^-\phi& \PP((V \otimes V \otimes V)^{H_3}) = \PP^2}
$$
In order for this to extend to $\Emb_{H_3}(V^*,(V^*)^3) = \PP^2$ and to get an isomorphism, one should need that $(W_p \otimes V \cap V \otimes W_p)^{H_3}$ is always 1-dimensional. This is indeed the case. Let $\chi_{a,b}$ be the 1-dimensional representation of $H_3$ defined by $\chi_{a,b}(e_1) = \omega^a, \chi_{a,b}(e_2) = \omega^b$.
\begin{theorem}
Let $\mathbb{V}=\mathbf{V}(abc)\subset \PP^2_{[a:b:c]}$. Then for each vertex $p$ of $\mathbb{V}$, the decomposition of $W_p\otimes V \cap V \otimes W_p$ in $H_3$-representations is given by $\chi_{0,0}\oplus \chi_{1,0}\oplus\chi_{2,0}$. In particular,
$(W_p\otimes V \cap V \otimes W_p)^{H_3}$ is 1-dimensional.
\end{theorem}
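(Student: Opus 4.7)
The plan is to compute the intersection $W_p \otimes V \cap V \otimes W_p$ explicitly at each of the three vertices $[0:0:1]$, $[1:0:0]$, $[0:1:0]$, and then diagonalize the $H_3$-action on the resulting three-dimensional space.

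First I record the representation-theoretic set-up. The central element $[e_1,e_2]$ acts on $V = V_1$ by $\omega$ and on $V_2$ by $\omega^{-1}$, hence trivially on $V_1 \otimes V_2$; since $V_1 \otimes V_2$ is $9$-dimensional and factors through $\Z_3 \times \Z_3$, it must be the regular representation
\[
V_1 \otimes V_2 \cong \bigoplus_{(a,b)\in \Z_3 \times \Z_3} \chi_{a,b}.
\]
At each of the three vertices a direct calculation shows $W_p \cong V_2$ (the eigenvalues of $e_2$ are $1,\omega,\omega^2$, cyclically permuted by $e_1$). Combined with $V\otimes V \cong 3V_2$, this implies that both $W_p \otimes V$ and $V \otimes W_p$ are isomorphic to $V_1 \otimes V_2$ and contain each character $\chi_{a,b}$ with multiplicity one, while $V^{\otimes 3}$ contains each with multiplicity three. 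Inside each three-dimensional $\chi_{a,b}$-isotypic component of $V^{\otimes 3}$, the two lines contributed by $W_p \otimes V$ and $V \otimes W_p$ therefore either coincide (contributing $\chi_{a,b}$ to the intersection) or meet trivially, so it remains to identify the three-dimensional subspace lying in both.

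Computing coefficient by coefficient in the monomial basis of $V^{\otimes 3}$ at each vertex, I obtain: for $p = [0:0:1]$, with relations $x^2,y^2,z^2$, the intersection equals $\langle x^{\otimes 3}, y^{\otimes 3}, z^{\otimes 3}\rangle$; for $p = [1:0:0]$, with relations $yz,zx,xy$, it is spanned by $x\otimes y\otimes z$, $y\otimes z\otimes x$, $z\otimes x\otimes y$; and for $p = [0:1:0]$, with relations $zy,xz,yx$, by $x\otimes z\otimes y$, $y\otimes x\otimes z$, $z\otimes y\otimes x$. In each of these three bases the total index sum of every basis vector is $\equiv 0 \pmod 3$, so $e_2$ acts as the identity and the intersection is concentrated in characters with $b = 0$; moreover $e_1$ cyclically permutes the three basis vectors, so diagonalizing this $\Z_3$-action yields eigenvalues $1,\omega,\omega^2$, producing exactly the decomposition $\chi_{0,0}\oplus\chi_{1,0}\oplus\chi_{2,0}$.

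The only obstacle in this argument is the coefficient-matching bookkeeping at each vertex, which is elementary. One could alternatively exploit a subgroup of $\wis{SL}_2(3)$ acting on $\PP^2$ that permutes the three vertices and reduce to a single case, but the three short direct computations offer essentially no saving.
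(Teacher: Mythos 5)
Your proof is correct and follows essentially the same route as the paper: at each vertex you exhibit the same explicit monomial basis of the intersection ($x^{3},y^{3},z^{3}$, respectively the cyclic and anticyclic words), then observe that $e_2$ acts trivially and $e_1$ by a cyclic permutation, which diagonalizes to $\chi_{0,0}\oplus\chi_{1,0}\oplus\chi_{2,0}$. The extra representation-theoretic preamble about multiplicities in $V^{\otimes 3}$ is harmless but not needed once the monomial bases are in hand.
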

\begin{proof}
As these algebras are monomial algebras, it is easy to find a basis of $W_p\otimes V \cap V \otimes W_p$. We have
\begin{itemize}
\item for $[0:0:1]$, we find $\C x^3+\C y^3+\C z^3$,
\item for $[1:0:0]$, we find $\C zxy+\C xyz+\C yzx$,
\item for $[0:1:0]$, we find $\C yxz+\C zyx+\C xzy$.
\end{itemize} 
In these 3 cases, it is clear that $e_2$ works trivially on these elements and $e_1$ works by cyclic permutation, leading to the claimed decomposition.
\end{proof}
Now, the other points that correspond to nonregular algebras lie in the $\wis{SL}_2(3)$-orbit of these 3 points. In order to prove that $(R_p \otimes V \cap V \otimes R_p)^{H_3}$ is indeed 1-dimensional, we need to work out what the $\wis{SL}_2(3)$-orbits are in the  set of simple representations of $H_3$. From the natural action of $\wis{SL}_2(3)$ on $\Z_3 \times \Z_3$ we find that
\begin{itemize}
\item $\chi_{00}$ is fixed,
\item $V$ and $V^*$ are fixed because the center is fixed,
\item the action is transitive on the set $\chi_{a,b}$, $(a,b) \neq (0,0)$.
\end{itemize}
Now, the vector spaces of the theorem have one thing in common: the action of $e_2$ is fixed. If one takes the $H_3$-representations $W_{a,b} = \chi_{a,b} \oplus \chi_{-a,-b}, a,b \in \{0,1\} $, then the center of $\wis{SL}_2(3)$ works trivially on the set $\{W_{a,b}|a,b \in \{0,1\}  \}$, so the action is really a $\wis{PSL}_2(3)$-action.
\par From this, we see that the induced action of $\wis{SL}_2(3)$ on the decomposition of $W_{[1:0:0]} \otimes V \cap V \otimes W_{[1:0:0]}$ or $W_{[0:0:1]} \otimes V \cap V \otimes W_{[0:0:1]}$ sends $\chi_{0,0}\oplus W_{1,0}$ to $\chi_{0,0} \oplus W_{a,b}$ for some  $a,b \in \{0,1\}$.
We have proved
\begin{theorem}
$(W_p\otimes V \cap V \otimes W_p)^{H_3}$ is 1-dimensional for every $p \in \PP^2$.
\end{theorem}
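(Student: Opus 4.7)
The plan is to partition $\PP^2$ into three parts and handle each separately.

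For $p \in \PP^2 \setminus D$, the algebra $S_p$ is AS-regular, so by the discussion preceding the previous theorem $W_p\otimes V\cap V\otimes W_p$ is already 1-dimensional. I would then observe that its generator
\[
a(zxy+xyz+yzx)+b(yxz+zyx+xzy)+c(x^3+y^3+z^3)
\]
is $H_3$-invariant: $e_1$ cyclically permutes $x,y,z$ and therefore stabilises each of the three bracketed sums, while $e_2$ acts diagonally with weights $0,1,2$ on $x,y,z$ and every monomial appearing has total weight divisible by 3. The whole 1-dimensional intersection is therefore already $H_3$-invariant, which settles this case.

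For the three vertex points $[1:0:0]$, $[0:1:0]$ and $[0:0:1]$, the conclusion is read off directly from the previous theorem: the $\chi_{0,0}$-isotypic component contributes exactly one $H_3$-invariant direction.

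The real work is at the remaining nine non-regular points in $D$. Each such $p$ lies in the $\wis{SL}_2(3)$-orbit of some vertex $p_0\in\{[1:0:0],[0:1:0],[0:0:1]\}$, and the identification $\PP^2=\Emb_{H_3}(V^*,(V^*)^3)$ from the previous section tells me how to transport $W_{p_0}$ to $W_p$. Since $\wis{SL}_2(3)$ acts on $H_3$ by automorphisms fixing the simple representation $V=V_1$ up to isomorphism, for each $g\in\wis{SL}_2(3)$ there is a linear isomorphism $\phi_g\in\Aut(V)$ with $\phi_g(h\cdot v)=g(h)\cdot\phi_g(v)$ for all $h\in H_3$, $v\in V$. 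The induced map $\phi_g^{\otimes 2}$ sends $W_{p_0}$ to $W_p$, so $\phi_g^{\otimes 3}$ sends $W_{p_0}\otimes V\cap V\otimes W_{p_0}$ onto $W_p\otimes V\cap V\otimes W_p$. Because $g$ is a bijection of $H_3$, the twisted equivariance carries $H_3$-fixed vectors to $H_3$-fixed vectors, giving
\[
\dim(W_p \otimes V \cap V \otimes W_p)^{H_3} = \dim(W_{p_0}\otimes V\cap V\otimes W_{p_0})^{H_3} = 1,
\]
the last equality being the previous theorem. The main obstacle is the bookkeeping of this twisted equivariance; it is completely controlled by the fact that group automorphisms of $H_3$ permute the simple representations and hence send $H_3$-fixed points to $H_3$-fixed points.
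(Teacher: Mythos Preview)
Your proposal is correct and follows essentially the same three-case strategy as the paper: the regular points via the explicit generator, the three vertices via the previous theorem, and the remaining nine points via the $\wis{SL}_2(3)$-action transporting the vertex computation. The only difference is cosmetic: where the paper tracks how the entire decomposition $\chi_{0,0}\oplus\chi_{1,0}\oplus\chi_{2,0}$ transforms under $\wis{SL}_2(3)$ (noting that $\chi_{0,0}$ is fixed while the other characters are permuted), you use the cleaner observation that a group automorphism, being a bijection on $H_3$, automatically carries $H_3$-fixed vectors to $H_3$-fixed vectors---this bypasses the orbit analysis on characters and is a mild simplification of the same idea.
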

So $\phi$ indeed extends to an isomorphism of $\PP^2$.
\par Now, theorem 1 of \cite{DeLaetLeBruyn} can be described as
\begin{theorem}
Let $A = T(V)/R_p$, $p=[a:b:c]$ be a Sklyanin algebra, $c_3$ be the central element of degree 3 in $A$ and let
$$
\xymatrix{(V \otimes V \otimes V)^{H_3} \ar[r]^-\pi & A_3^{H_3} }
$$
be the natural projection map. Then $\pi^{-1}(\C c_3)$ is a 2-dimensional vector space of $(V \otimes V \otimes V)^{H_3}$, which corresponds in $\PP((V \otimes V \otimes V)^{H_3})$ to the tangent line of the elliptic curve $E$ at the point $p$.
\end{theorem}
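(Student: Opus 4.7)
The plan is to split the argument into a dimension count establishing $\dim\pi^{-1}(\C c_3)=2$, followed by a geometric identification of the corresponding projective line with the tangent to $E$ at $p$ via the main result of \cite{DeLaetLeBruyn}.

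For the dimension count, I would first show that $(V\otimes V\otimes V)^{H_3}$ is 3-dimensional and $A_3^{H_3}$ is 2-dimensional. The former follows from $V\otimes V\cong (V^*)^3$, noted in Section 3, combined with the observation that $V\otimes V^*$ pulls back the regular representation of $H_3/[H_3,H_3]=\Z_3\times\Z_3$, so the trivial character occurs with multiplicity one. The latter follows because $A_3\cong S^3V$ as $H_3$-modules (Sklyanin algebras being $H_3$-deformations of $\C[V]$), and a direct monomial check yields $(S^3V)^{H_3}=\C(x_0^3+x_1^3+x_2^3)\oplus\C(x_0x_1x_2)$. Since $H_3$ is finite, $(-)^{H_3}$ is exact, so $\pi$ restricts to a surjection $(V\otimes V\otimes V)^{H_3}\twoheadrightarrow A_3^{H_3}$ with 1-dimensional kernel. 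This kernel contains the nonzero Heisenberg-invariant element
$$\phi(p)=a(zxy+xyz+yzx)+b(yxz+zyx+xzy)+c(x^3+y^3+z^3)$$
generating $W_p\otimes V\cap V\otimes W_p$ as produced in the previous theorem, so it coincides with $\C\phi(p)$. Because $c_3$ is central, nonzero, and $H_3$-invariant (the latter being recorded in \cite{DeLaetLeBruyn}), $\C c_3\subset A_3^{H_3}$ is a 1-dimensional subspace, and therefore $\pi^{-1}(\C c_3)$ is 2-dimensional.

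For the tangent-line claim, I would invoke theorem 1 of \cite{DeLaetLeBruyn} in the form: a lift of $c_3\in A_3$ to $(V\otimes V\otimes V)^{H_3}$ is, up to a nonzero scalar, the Heisenberg-invariant element $\phi(p')$, where $p'\in\PP^2$ is the parameter of $\mathcal{A}_{-2\tau}(E)$ (while $p$ parametrizes $\mathcal{A}_\tau(E)$). This gives $\pi^{-1}(\C c_3)=\C\phi(p)+\C\phi(p')$, whose projectivization is the line joining $\phi(p)$ and $\phi(p')$ in $\PP((V\otimes V\otimes V)^{H_3})\cong\PP^2$. Embedding $E$ in this $\PP^2$ as a smooth cubic via $\tau\mapsto\phi(p(\tau))$, both points lie on $E$, and the group-law identity $\tau+\tau+(-2\tau)=0$ is the classical chord-tangent statement that the line joining these two points on the cubic is the tangent to $E$ at $p$.

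The main obstacle is the translation of theorem 1 of \cite{DeLaetLeBruyn}, originally phrased via superpotentials and cyclic derivatives, into the clean statement that a lift of $c_3$ is $\phi(p')$; once this identification is in hand, the dimension bookkeeping and the chord-tangent argument are essentially formal.
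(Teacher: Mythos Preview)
Your proposal is correct and follows the same approach as the paper: both identify $\ker\pi=\C\phi(p)$, invoke \cite{DeLaetLeBruyn} to see that $\phi(-2p)$ lifts $c_3$, and then conclude via the chord--tangent description of the group law that the line through $p$ and $-2p$ is the tangent to $E$ at $p$. Your version supplies an explicit dimension count for $(V^{\otimes 3})^{H_3}$ and $A_3^{H_3}$ that the paper's proof leaves implicit, but the structure of the argument is identical.
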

\begin{proof}
According to \cite{DeLaetLeBruyn}, the vector space corresponding to the point $-2p \in \PP((V \otimes V \otimes V)^{H_3})$ is indeed mapped to $\C c_3$. As the vector space $$
\C(a(zxy+xyz+yzx) + b(yxz+zyx+xzy) + c (x^3+y^3+z^3))
$$
is the kernel of $\pi$, it follows that the vector space generated by $p$ and $-2p$ is indeed $\pi^{-1}(\C c_3)$. The fact that this is the tangent line to $p$ at $E$ follows as the third point of intersection of the line through $p$ and $-2p$ is $p$ itself.
\end{proof}
\section{Quotients of non-regular quadratic algebras}
In the projective plane $\PP^2_{[a:b:c]}$, there are 12 points where the corresponding algebra is not AS-regular: the $\wis{SL}_2(3)$-orbit of $[1:0:0]$ (containing 8 elements) and the orbit of $[0:0:1]$ (containing 4 elements). All these algebras have as Hilbert series $\frac{1+t}{1-2t}$ and are clearly not domains, for a detailed description of these algebras, see \cite{smithdegenerate} and \cite{DegenerateWalton}. However, it seems that these algebras have a 1-dimensional family of quotients that `behave' like the 3-dimensional AS-regular algebras in the following sense:
\begin{itemize}
\item the Hilbert series is the same,
\item the character series is the same for each element of $H_3$ and
\item there exists a central element of degree 3, fixed by the $H_3$-action.
\end{itemize}
Let us consider the following example: take the Clifford algebra $C$ over $\C[u_0,u_1,u_2]$ with associated quadratic form
$$
\begin{bmatrix}
0 & u_2 & u_1 \\ u_2 & 0 & u_0 \\ u_1 & u_0 & 0
\end{bmatrix}.
$$
In terms of generators and relations of $C$, we have 3 generators $x_0,x_1,x_2$ with relations
$$
\begin{cases}
x_0^2 = x_1^2 = x_2^2 = 0,\\
[\{x_i,x_{i+1}\},x_{i+2}] = 0, 0 \leq i \leq 2.
\end{cases}
$$
This algebra is a quotient of the algebra $S_{[0:0:1]}$ by 2 elements of degree 3 (adding 2 commutation relations of degree 3 automatically implies the third relation).
\begin{theorem}
The character series of $C$ is the same as the character series of the polynomial ring in 3 variables.
\end{theorem}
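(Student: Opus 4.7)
I would exploit the Clifford-algebra structure of $C$. The relations $x_i^2=0$ together with $[\{x_i,x_{i+1}\},x_{i+2}]=0$ say exactly that the elements $u_0=\tfrac12\{x_1,x_2\}$, $u_1=\tfrac12\{x_0,x_2\}$, $u_2=\tfrac12\{x_0,x_1\}$ are central in $C$, so $R:=\C[u_0,u_1,u_2]\subset C$ is a polynomial subalgebra and $C$ is the Clifford algebra of the stated quadratic form over $R$. By the standard PBW theorem for Clifford algebras, $C$ is a free $R$-module of rank $8$ with basis the ordered products $x_I$, $I\subseteq\{0,1,2\}$.

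Define the filtration $F_kC$ to be the $R$-submodule of $C$ generated by products of at most $k$ of the $x_i$. This filtration is $H_3$-stable since both $R$ and $V=\C x_0+\C x_1+\C x_2$ are. Passing to the associated graded kills the $u_i$ appearing on the right of the Clifford relations, so
\[
\mathrm{gr}(C) \;\cong\; R\otimes_{\C}\wedge V
\]
as graded $H_3$-algebras. Since $\C[H_3]$ is semisimple the filtration splits and $C\cong \mathrm{gr}(C)$ as graded $H_3$-modules. The $u_i$ lie in $V\otimes V$, hence carry central character $\omega^2$; so the $3$-dimensional span $\C u_0+\C u_1+\C u_2$ is forced to be $V_2$, giving $R\cong\C[V_2]$ as an $H_3$-algebra with generators in degree $2$.

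The standard character-series formulas for symmetric and exterior algebras now give
\[
Ch_C(g,t) \;=\; Ch_R(g,t)\cdot Ch_{\wedge V}(g,t) \;=\; \frac{\det(I+gt\mid V)}{\det(I-gt^2\mid V_2)},
\]
and the goal is to match this with $Ch_{\C[V]}(g,t)=1/\det(I-gt\mid V)$. The crucial input is that $H_3$ has exponent $3$: the relations $e_1^3=e_2^3=1$ force $[e_1,e_2]^3=1$, and so $g^3=1$ for every $g\in H_3$. Hence each eigenvalue $\lambda$ of $g$ on $V$ is a cube root of unity, so $\lambda^{-1}=\lambda^2$. Since $g$ acts on $V_2=V^*$ with eigenvalues $\{\lambda_i^{-1}\}=\{\lambda_i^2\}$,
\[
\det(I-gt^2\mid V_2)\;=\;\prod_i(1-\lambda_i^2 t^2)\;=\;\prod_i(1-\lambda_i t)(1+\lambda_i t)\;=\;\det(I-gt\mid V)\cdot\det(I+gt\mid V),
\]
and substituting cancels $\det(I+gt\mid V)$, yielding exactly $1/\det(I-gt\mid V)=Ch_{\C[V]}(g,t)$.

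The main obstacle is the Clifford/PBW claim itself: one must verify directly from the given presentation that $R$ is a polynomial subalgebra of $C$ and that $C$ is free of rank $8$ over $R$, so the filtration is honest and $\mathrm{gr}(C)$ really is $R\otimes_{\C}\wedge V$. Once this is in place, the rest is a formal character computation.
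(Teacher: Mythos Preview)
Your argument is correct and rests on the same structural fact the paper uses: $C$ is free of rank $8$ over the polynomial subring $R=\C[u_0,u_1,u_2]$ with the ordered-monomial basis, and the $H_3$-action on both $R$ and $V$ is the standard one. The paper stops there and asserts that the character series computation is then straightforward (deferring to \cite{DeLaet}); you go further and carry it out via the identity $\det(I+gt\mid V)/\det(I-gt^2\mid V_2)=1/\det(I-gt\mid V)$, which is a clean uniform way to finish since it avoids checking conjugacy classes one by one. The only place to tighten the write-up is the remark that ``$e_1^3=e_2^3=1$ force $[e_1,e_2]^3=1$'': this does not follow formally from the presentation as stated, but of course it holds because $H_3$ has order $27$ with center $\Z_3$, so just invoke that $H_3$ has exponent $3$ directly.
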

\begin{proof}
Define on $C$ an action of $H_3$ by \begin{eqnarray*}
 e_1 \cdot x_i = x_{i-1}         & e_1 \cdot u_i = u_{i-1},\\
 e_2 \cdot x_i = \omega^i x_{i}  & e_2 \cdot u_i = \omega^{2i}u_{i}.
\end{eqnarray*}
$C$ is a free module of rank $8$ over $\C[u_0,u_1,u_2]$ with basis
$$
\{1,x_0,x_1,x_2,x_1x_2,x_2x_0,x_0x_1,x_0x_1x_2\}.
$$
It is then easy to compute that under these conditions, $C$ is a graded algebra with character series equal to the polynomial ring in 3 variables with the standard action of $H_3$ (see \cite{DeLaet} for the AS-regular case, this one is similar).
\end{proof}
In particular, this means that the natural epimorphism
$$
\xymatrix{S_{[0:0:1]} \ar[r]^-\pi & C}
$$
has as kernel an ideal generated by 2 elements of degree 3. The $\C$-vector space generated by these 2 elements decomposes as $H_3$-representations into $\chi_{1,0} \oplus \chi_{-1,0}$, which is the `$H_3$-surplus' of $S_{[0:0:1]}$ in degree 3.
\par Considering the representations of $C$, we have
\begin{theorem}
$C$ is Azumaya over every point of $\wis{Spec}(Z(C))$ except over the trivial ideal $(u_0,u_1,u_2)$.
\end{theorem}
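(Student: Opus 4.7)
The plan is to pin down $Z(C)$ explicitly and then cover $\wis{Spec}(Z(C)) \setminus \{(u_0,u_1,u_2)\}$ by three principal opens on which $C$ is a free $Z$-module of rank $4$ with matrix fibers.

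For the center, I claim the degree-$3$ element
\[
c := x_0 x_1 x_2 - u_0 x_0 + u_1 x_1 - u_2 x_2 \in C_3
\]
is central and satisfies $c^2 = -2 u_0 u_1 u_2$; both are routine commutator expansions using only $x_i^2 = 0$ and $\{x_i, x_{i+1}\} = 2 u_{i+2}$. Writing a general central $z = \sum a_I x^I$ over $R := \C[u_0,u_1,u_2]$ in the standard Clifford basis and solving the linear system $[z, x_k] = 0$ shows that the only solutions are $z \in R[c]$, so $Z(C) = R[c]/(c^2 + 2 u_0 u_1 u_2)$. The ideal $(u_0, u_1, u_2) \subset Z(C)$ then cuts out precisely the single closed point $P_0$ where all $u_i$ and $c$ vanish.

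For $i \in \{0, 1, 2\}$ set $B_i := \{1, x_{i+1}, x_{i+2}, x_{i+1} x_{i+2}\}$ with indices mod $3$. The main claim is that $B_i$ is a free $Z$-basis of $C$ on $D(u_i) \subset \wis{Spec}(Z(C))$. For $i = 0$, verify this by computing the products $c \cdot b$ for $b \in B_0$ (e.g.\ $c \cdot x_1 = u_0 x_0 x_1 + u_2 x_1 x_2 - 2 u_0 u_2$) and expanding $B_0 \cup c B_0$ in the standard $R$-basis $\{1, x_0, x_1, x_2, x_0 x_1, x_0 x_2, x_1 x_2, x_0 x_1 x_2\}$ of $C$. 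After separating the four columns not involving $x_0$ from the four that do, the $8 \times 8$ transition matrix is block lower triangular with $I_4$ in the top-left and a $4 \times 4$ lower-right block of determinant $u_0^4$, hence invertible on $D(u_0)$. The $H_3$-automorphism $e_1$ provides analogous bases $B_1, B_2$ on $D(u_1), D(u_2)$; and since $V(u_0, u_1, u_2) = \{P_0\}$ inside $\wis{Spec}(Z(C))$, the three opens cover $\wis{Spec}(Z(C)) \setminus \{P_0\}$.

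Finally, at any closed point $P \neq P_0$, after applying an $H_3$-shift one may assume $u_0(P) = a \neq 0$. Using the basis $B_0$, the fiber $C \otimes_Z k(P)$ is the $4$-dimensional $\C$-algebra generated by $\bar x_1, \bar x_2$ with $\bar x_i^2 = 0$ and $\{\bar x_1, \bar x_2\} = 2a$, which via $\bar x_1 \mapsto \sqrt{2a}\,E_{12}, \bar x_2 \mapsto \sqrt{2a}\,E_{21}$ is $M_2(\C)$ -- central simple of the correct dimension (the PI-degree of $C$ being $2$), so $C$ is Azumaya at $P$. The conceptual pitfall -- and the main thing to get right -- is that a naive computation of $C$ modulo the two-sided ideal generated by $u_0 - a$, $u_1$, $u_2$, $c$, done by imposing these relations on the Clifford basis of $C$, appears $7$-dimensional because setting $c = 0$ only kills one Clifford monomial. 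The true cut is hidden inside the ideal generated by $c$: in that quotient one has $c = \tfrac{1}{2} x_0 [x_1, x_2]$ with $[x_1, x_2]^2 = 4 a^2$ a unit, so the ideal generated by $c$ contains $[x_1, x_2] \cdot c = 2 a^2 x_0$ and $x_0$ is killed too. Working with the $Z$-module basis $B_0$ from the middle paragraph bakes this collapse in automatically.
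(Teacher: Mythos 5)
Your argument is correct, but it takes a genuinely different and far more hands-on route than the paper. The paper's proof is two lines: it invokes the standard criterion that a graded Clifford algebra fails to be Azumaya at a point exactly when the specialized quadratic form has rank $\leq 1$, and since the principal $2\times 2$ minors of the form are $-u_0^2,-u_1^2,-u_2^2$, this locus is precisely $\mathbf{V}(u_0,u_1,u_2)$. You instead rebuild everything from scratch: the central element $c$ (which is $\tfrac{1}{2}(zxy-yxz)$, i.e.\ one half of the generator $g$ the paper itself uses later when describing $Z(C)$ in the representation-theory section), the identity $c^2=-2u_0u_1u_2$, the presentation $Z(C)=R[c]/(c^2+2u_0u_1u_2)$, local freeness of rank $4$ over $Z(C)$ on the opens $D(u_i)$, and the fiberwise identification $C\otimes_Z k(P)\cong M_2(\C)$. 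I verified your computations ($[c,x_i]=0$, $c^2=-2u_0u_1u_2$, $cx_1=u_0x_0x_1+u_2x_1x_2-2u_0u_2$, and the transition determinant $u_0^4$) and they are right; combined with the standard fact that a finite module over a local center is Azumaya iff its closed fiber is central simple, the argument goes through. What your approach buys is self-containedness and an explicit local trivialization of $C$ as a rank-$4$ algebra, which the paper's appeal to the rank criterion does not give; what it costs is length and reliance on the full computation of $Z(C)$, which you assert from a linear-algebra calculation rather than carry out. Two minor points: the theorem also asserts that $C$ is \emph{not} Azumaya over $(u_0,u_1,u_2)$, which you never address --- it is immediate, since the fiber there is the $8$-dimensional exterior algebra on $x_0,x_1,x_2$, local rather than central simple, so one sentence would close this; and your closing ``pitfall'' discussion implicitly specializes to a point with $u_1=u_2=0$, which is not the general point of $D(u_0)$, though your actual argument via the basis $B_0$ does not depend on that specialization.
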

\begin{proof}
$C$ is not Azumaya over a point if and only if the associated quadratic form after specialization is of rank $\leq 1$. Taking the $2 \times 2$-minors of the quadratic form, this only happens if $u_i^2$ is mapped to $0$ for all $0 \leq i \leq 2$.
\end{proof}
This means that, considering representations, $C$ has more in common with the Sklyanin algebras associated to points of order 2 than the quantum algebra $\C_{-1}[x,y,z]$, although the last one is AS-regular.
\par In the next section, we will find a 1-dimensional family of quotients of $S_{[0:0:1]}$ by degree 3 elements that have the correct character series up to degree 4. we will show that there is an open subset in this quotient that gives algebras with the correct character series.
\subsection{The Hilbert series}
We work for now in the algebra $S=S_{[0:0:1]}$. $V$ will be the vector space $\C x + \C y + \C z$ and the action of $H_3$ is defined by
\begin{eqnarray}
e_1 \cdot x = z, & e_1 \cdot y = x, & e_1 \cdot z = y,\\
e_2 \cdot x = x, & e_2 \cdot y = \omega y, & e_2 \cdot z = \omega^2 z.
\end{eqnarray}

Decomposing the degree 3 part $S_3$ in $H_3$-modules, we find
$$
S_3 \cong \sum_{i,j \in \Z_3} \chi_{i,j} \oplus \chi_{0,0} \oplus \chi_{1,0} \oplus \chi_{2,0}.
$$
In particular, the multiplicity of $\chi_{1,0}$ and $\chi_{2,0}$ is 2 in $S_3$. This means that the variety parametrizing quotients of $S$ with the right character series up to degree 3 is given by $\PP^1 \times \PP^1$.
\par Let $I_p$ be the ideal of $S$ generated by
\begin{align*}
v_1 = A_1 (zxy+\omega xyz+ \omega^2 yzx) + B_1 (yxz+\omega zyx+\omega^2 xzy),\\
v_2 = A_2 (zxy+\omega^2 xyz+ \omega yzx) + B_2 (yxz+\omega^2 zyx+\omega xzy),
\end{align*}
with $p=([A_1:B_1],[A_2:B_2]) \in \PP^1 \times \PP^1$. Let $W_p = \C v_1 + \C v_2$. One of the similarities we want to investigate is whether we can get quotients such that the Hilbert series is correct, in particular, correct in degree 4. As $\dim S_4 = 24$ and $\dim \C[x,y,z]_4 = 15$, we need to have that $\dim (I_p)_4 = 9$. We have
$$
\dim (I_p)_4 = \dim W_p \otimes V + \dim V \otimes W_p - \dim V \otimes W_p \cap W_p \otimes V.
$$
We don't need to worry about character series up to degree 4 as $S_4 \cong V^8$. We need to find $W_p$ such that $\dim V \otimes W_p \cap W_p \otimes V$ is 3-dimensional. However, as this vector space is an $H_3$-representation and isomorphic to $V^e$ for some $e \in \N$, it is enough to find $W_p$ such that $(V \otimes W_p \cap W_p \otimes V)^{e_2}$ is 1-dimensional.
\begin{lemma}
$S/I_p$ has the correct Hilbert series up to degree 4 iff $p$ lies on the line $\mathbf{V}(A_1B_2-A_2B_1) = \Delta \subset \PP^1 \times \PP^1$.
\label{lem:Hil}
\end{lemma}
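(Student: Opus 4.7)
The plan is to turn the Hilbert-series condition at each degree into a direct linear-algebra check. In degrees $\le 2$ I have $(I_p)_n = 0$, so the Hilbert series automatically matches $\C[V]$. In degree $3$ the generators $v_1$ and $v_2$ lie in the distinct $H_3$-isotypic components $\chi_{1,0}$ and $\chi_{2,0}$ of $S_3$ and each is nonzero, since they are linear combinations (with nontrivial coefficients $(A_i, B_i)$) of six linearly independent non-square monomials; so $v_1, v_2$ are automatically linearly independent and $\dim(S/I_p)_3 = 12 - 2 = 10$. The whole statement therefore reduces to verifying $\dim(I_p)_4 = 9$.

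In degree $4$ I have $(I_p)_4 = V\cdot W_p + W_p\cdot V$ inside $S_4$. A short expansion using $x^2=y^2=z^2=0$ shows that $xv_i, yv_i, zv_i$ have pairwise disjoint monomial supports (and are nonzero), so $\dim V\cdot W_p = 6 = \dim W_p\cdot V$. Hence the target condition becomes $\dim(V\cdot W_p \cap W_p\cdot V) = 3$. Since $S_4 \cong V_1^8$ is $V_1$-isotypic, every $H_3$-submodule of $S_4$ is isomorphic to some $V_1^e$ with $\dim(V_1^e)^{e_2} = e$. Using the exactness of $e_2$-invariants, the problem reduces to showing that $(V\cdot W_p)^{e_2}\cap (W_p\cdot V)^{e_2}$ is $1$-dimensional.

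Because $v_1, v_2$ are themselves $e_2$-invariant (their types are $\chi_{*,0}$), $(V\cdot W_p)^{e_2}$ is spanned by $xv_1, xv_2$ and $(W_p\cdot V)^{e_2}$ by $v_1 x, v_2 x$. Direct computation using $x^2=0$ gives
\begin{align*}
 xv_1 &= A_1\, xzxy + A_1\omega^2\, xyzx + B_1\, xyxz + B_1\omega\, xzyx,\\
 xv_2 &= A_2\, xzxy + A_2\omega\, xyzx + B_2\, xyxz + B_2\omega^2\, xzyx,\\
 v_1 x &= A_1\, zxyx + A_1\omega\, xyzx + B_1\, yxzx + B_1\omega^2\, xzyx,\\
 v_2 x &= A_2\, zxyx + A_2\omega^2\, xyzx + B_2\, yxzx + B_2\omega\, xzyx.
\end{align*}
The two pairs of supports meet exactly in the $2$-dimensional span of $\{xyzx, xzyx\}$, so an element $\alpha xv_1 + \beta xv_2$ that also lies in $(W_p\cdot V)^{e_2}$ must have vanishing coefficients at $xzxy$ and $xyxz$, giving
\[ \alpha A_1 + \beta A_2 = 0, \qquad \alpha B_1 + \beta B_2 = 0, \]
which has a nontrivial solution precisely when $A_1 B_2 - A_2 B_1 = 0$, i.e.\ when $p \in \Delta$.

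To finish, when $p \notin \Delta$ this forces $\alpha = \beta = 0$, so the intersection is trivial and $\dim (I_p)_4 = 12$; when $p \in \Delta$ the candidate $A_2 xv_1 - A_1 xv_2$ simplifies, using $A_1 B_2 = A_2 B_1$, to a nonzero multiple of $A_1\, xyzx - B_1\, xzyx$, and the analogous combination $A_2 v_1 x - A_1 v_2 x$ yields the same line, so the intersection equals $\C(A_1\, xyzx - B_1\, xzyx)$ and $\dim (I_p)_4 = 9$. The main obstacle is the careful bookkeeping of which monomials vanish under $x^2=0$ in the four expansions above; once the coefficient matrix $\bigl(\begin{smallmatrix}A_1 & A_2\\ B_1 & B_2\end{smallmatrix}\bigr)$ appears, the rank condition cutting out $\Delta$ is immediate.
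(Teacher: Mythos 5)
Your argument is essentially the paper's: both reduce, via the $V_1$-isotypic structure of $S_4$, to the four $e_2$-invariant elements $xv_1,xv_2,v_1x,v_2x$ with exactly the expansions the paper writes down, and both extract the condition $A_1B_2-A_2B_1=0$ from their coefficient matrix --- your disjoint-support observation just packages the paper's $4\times 4$-minor computation and case analysis more cleanly. The one loose end is the point $A_1=A_2=0$ of $\Delta$ (i.e.\ $t=\infty$), where your exhibited witness $A_2\,xv_1-A_1\,xv_2$ is the zero vector; there you should instead take $B_2\,xv_1-B_1\,xv_2=B_1B_2(\omega-\omega^2)\,xzyx$, which matches the corresponding right-hand combination $B_2\,v_1x-B_1\,v_2x$ up to a nonzero scalar, so the intersection is still the line $\C\, xzyx$ and the proof goes through.
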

\begin{proof}
The elements fixed by $e_2$ in $W_p \otimes V+ V \otimes W_p$ lie in the vector space generated by
\begin{align*}
xv_1 &= A_1 xzxy + A_1 \omega^2 xyzx + B_1 xyxz + B_1 \omega   xzyx,\\
xv_2 &= A_2 xzxy + A_2 \omega   xyzx + B_2 xyxz + B_2 \omega^2 xzyx,\\
v_1x &= A_1 zxyx + A_1 \omega   xyzx + B_1 yxzx + B_1 \omega^2 xzyx,\\
v_2x &= A_2 zxyx + A_2 \omega^2 xyzx + B_2 yxzx + B_2 \omega   xzyx.
\end{align*}
Then $V \otimes W_p \cap W_p \otimes V \neq 0$ iff the following matrix has rank $\leq 3$:
$$
\begin{bmatrix}
A_1 & 0    & A_1 \omega^2 & B_1 & 0   & B_1 \omega \\
A_2 & 0    & A_2 \omega   & B_2 & 0   & B_2 \omega^2 \\
0   & A_1  & A_1 \omega   & 0   & B_1 & B_1 \omega^2 \\
0   & A_2  & A_2 \omega^2 & 0   & B_2 & B_2 \omega
\end{bmatrix}.
$$
The first $4\times 4$-minor is equal to
$$A_1B_2A_1A_2(\omega^2-\omega)-A_2B_1 A_1A_2(\omega^2-\omega) = (\omega^2-\omega)A_1A_2(A_1B_2-A_2B_1).$$
If $A_1 = 0$, then we can take $B_1=1$ and the $4 \times 4$-minor given by the columns $(2,3,4,5)$ becomes
$$
\begin{bmatrix}
0 & 0 & 1 & 0 \\
0 & A_2 \omega & B_2 & 0\\
0 & 0 & 0 & 1 \\
A_2 & A_2 \omega^2 & 0 & B_2
\end{bmatrix}.
$$
Taking the determinant of this matrix, one gets $A_2^2 \omega$. So this also implies $A_1B_2-A_2B_1=0$. A similar result is true if we set $A_2=0$.
\par If $A_1B_2-A_2B_1=0$, then one immediately checks that 
$$ \begin{bmatrix}
-B_2 & B_1 & -B_2 & B_1 \\
-A_2 & A_1 & -A_2 & A_1
\end{bmatrix}
\begin{bmatrix}
A_1 & 0    & A_1 \omega^2 & B_1 & 0   & B_1 \omega \\
A_2 & 0    & A_2 \omega   & B_2 & 0   & B_2 \omega^2 \\
0   & A_1  & A_1 \omega   & 0   & B_1 & B_1 \omega^2 \\
0   & A_2  & A_2 \omega^2 & 0   & B_2 & B_2 \omega
\end{bmatrix}=0.
$$
As either one of the rows of the first matrix is not 0, we have that $\dim V \otimes W_p \cap W_p \otimes V \geq 3$. The only points where this inequality is possibly strict is when either both $A_1$ and $B_2$ are 0 or both $A_2$ and $B_1$ are 0, this follows from taking the determinants of
$$
\begin{bmatrix}
A_2 & 0    & A_2 \omega \\
0   & A_1  & A_1 \omega\\
0   & A_2  & A_2 \omega^2
\end{bmatrix} \text{ and }
\begin{bmatrix}
B_2 & 0   & B_2 \omega^2 \\
0   & B_1 & B_1 \omega^2 \\
0   & B_2 & B_2 \omega
\end{bmatrix}.
$$
But if for example $A_1 = 0$, then necessarily $A_2 = 0$, but $B_2$ and $A_2$ can not be 0 at the same time. Similar results hold for the other cases, so we are done.
\end{proof}
This means that the only points we have to consider lie on the diagonal $\Delta \subset \PP_{[A_1:B_1]}^1 \times \PP_{[A_2:B_2]}^1$. From now on, we write $t = \frac{B}{A}$ for $[A:B] \in \PP^1$ and let $I_t$ be the ideal in $S$ generated by the elements
\begin{align*}
(v_1)_t = A (zxy+\omega xyz+ \omega^2 yzx) + B (yxz+\omega zyx+\omega^2 xzy),\\
(v_2)_t = A (zxy+\omega^2 xyz+ \omega yzx) + B (yxz+\omega^2 zyx+\omega xzy),
\end{align*}
\par The next obvious question is whether these algebras parametrized by $\PP^1$ have the correct Hilbert series. For $\C^* = \PP^1\setminus \{0,\infty \}$ this is true.
\begin{lemma}
The Clifford algebra $C$ corresponds to taking the quotient for the value $t=-1$.
\end{lemma}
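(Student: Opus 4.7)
The plan is to realise both ideals as $H_3$-stable two-dimensional subspaces of $S_3$ (with $S=S_{[0:0:1]}$) and check they coincide. First I would fix the identification of generators: the $H_3$-action on $C$ given by $e_1\cdot x_i = x_{i-1}$ matches the one on $S$, namely $e_1\cdot x=z,\ e_1\cdot y=x,\ e_1\cdot z=y$, exactly when $x_0=x,\ x_1=y,\ x_2=z$. Under this identification the natural epimorphism $S\twoheadrightarrow C$ is the quotient by the ideal $J$ generated by the three cubic commutators
\[
d_0=[\{x,y\},z],\quad d_1=[\{y,z\},x],\quad d_2=[\{z,x\},y].
\]

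Next I would analyse the $H_3$-structure of $J\cap S_3=\mathrm{span}(d_0,d_1,d_2)$. Direct expansion gives $d_0+d_1+d_2=0$, so this span is exactly two-dimensional in $S_3$. Each $d_i$ is $e_2$-invariant (every summand has multidegree $(1,1,1)$), and $e_1$ cycles them as $d_0\mapsto d_2\mapsto d_1\mapsto d_0$. Hence, as an $H_3$-module,
\[
\mathrm{span}(d_0,d_1,d_2) \;\cong\; \chi_{1,0}\oplus\chi_{2,0},
\]
with isotypic components spanned by $\eta_1:=d_0+\omega d_1+\omega^2 d_2$ and $\eta_2:=d_0+\omega^2 d_1+\omega d_2$ respectively. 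The same decomposition holds for $I_{-1}\cap S_3=\mathrm{span}((v_1)_{-1},(v_2)_{-1})$ since $(v_1)_t\in\chi_{1,0}$ and $(v_2)_t\in\chi_{2,0}$ for every $t$, by construction of the family.

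Because the $\chi_{1,0}$- and $\chi_{2,0}$-isotypic components of $S_3$ are each two-dimensional, the equality $I_{-1}\cap S_3 = J\cap S_3$ is equivalent to the two proportionalities $(v_1)_{-1}\propto \eta_1$ and $(v_2)_{-1}\propto \eta_2$. The first is a short coefficient comparison on the six multilinear monomials in $x,y,z$, yielding $(v_1)_{-1} = \tfrac{\omega}{1-\omega}\,\eta_1$, and uses only $\omega^3=1$ and $1+\omega+\omega^2=0$. The second proportionality then follows from the Galois automorphism $\omega\leftrightarrow\omega^2$, which by inspection of the defining formulas swaps $(v_1)_{-1}$ with $(v_2)_{-1}$ and $\eta_1$ with $\eta_2$. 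Once these hold, $I_{-1}$ and $J$ agree in $S_3$ and hence generate the same two-sided ideal of $S$; consequently $S/I_{-1}\cong C$.

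The only delicate point is the initial labelling; once $x_0,x_1,x_2$ are identified with $x,y,z$ compatibly with the two $H_3$-actions, everything is forced and what remains is a careful coefficient check. There is no conceptual obstacle beyond this bookkeeping.
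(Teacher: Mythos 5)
Your proof is correct and takes essentially the same route as the paper: both reduce the claim to checking that the two-dimensional span of the cubic commutators coincides with $\C(v_1)_{-1}+\C(v_2)_{-1}$ inside the multilinear part of $S_3$. The paper does this via a single $3\times 6$ rank computation (one commutator lies in the span, the others following from $H_3$-stability and $d_0+d_1+d_2=0$), while you verify the same identity by matching the $\chi_{1,0}$- and $\chi_{2,0}$-isotypic generators; your coefficient $\omega/(1-\omega)$ checks out.
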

\begin{proof}
It is enough to prove that the relation $[\{x,y\},z]$ belongs to the vector space $\C(v_1)_{-1}+\C(v_2)_{-1}$. This means that the following matrix should have rank 2
$$
\begin{bmatrix}
1  & \omega   & \omega^2 & -1 & -\omega   & -\omega^2 \\
1  & \omega^2 & \omega   & -1 & -\omega^2 & -\omega   \\
-1 & 1        & 0        &  1 & -1        & 0  
\end{bmatrix},
$$
which is indeed true.
\end{proof}
We will later see that all these algebras can be embedded in a smashed product $C \# \Z$ if $t \neq 0,\infty$, from which it will follow that
\begin{theorem}
Each algebra $S/I_t$ has the correct Hilbert series if $t \neq 0,\infty$.
\label{th:Hilbert}
\end{theorem}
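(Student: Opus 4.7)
The plan is to carry out the strategy announced in the statement: for each $t \in \C^*$, construct an injective graded algebra homomorphism $\phi_t : S/I_t \hookrightarrow C \# \Z$ into a smash product of the Clifford algebra $C$ (the $t=-1$ quotient) with the infinite cyclic group, and then pin down the Hilbert series of $S/I_t$ by identifying the image. The Hilbert series of $C$ itself is already $\frac{1}{(1-s)^3}$: since $C$ is free of rank $8$ over the central subalgebra $\C[u_0,u_1,u_2]$ with basis in degrees $\{0,1,1,1,2,2,2,3\}$, we get $H_C(s) = (1+s)^3/(1-s^2)^3 = 1/(1-s)^3$. Thus, once $\phi_t$ is set up as a graded injection whose image can be matched against a translate of $C$, the desired Hilbert series follows.

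Concretely, I would let $\Z = \langle g\rangle$ act on $C$ by a diagonal graded automorphism $\sigma_t$ scaling $x,y,z$ by scalars $\alpha,\beta,\gamma$ that depend on $t$, and define $\phi_t(x)=xg^{a}$, $\phi_t(y)=yg^{b}$, $\phi_t(z)=zg^{c}$ for integer exponents $a,b,c$. The quadratic relations $\phi_t(x_i)^2=0$ of $S$ are automatic because each $\phi_t(x_i)^2$ is a scalar multiple of $x_i^2 g^{2a_i}$ and $x_i^2=0$ in $C$. The cubic relations $(v_1)_t$ and $(v_2)_t$ then impose a linear system: the expressions $\phi_t((v_i)_t) \in C\#\Z$ must lie in the span of the Clifford relations $[\{x_j,x_{j+1}\},x_{j+2}]=0$, which fixes the data $(\sigma_t,a,b,c)$ up to normalization. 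Verifying that this system is solvable for \emph{every} $t\in\C^*$ will be the combinatorial heart of the argument.

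For the Hilbert series conclusion, I would choose the exponents $a,b,c$ so that distinct degree-$n$ monomials in $x,y,z$ acquire distinct $g$-weights in $C\#\Z$; then the image of $(S/I_t)_n$ decomposes as a direct sum over $g$-weights, each summand sitting inside a shift of $C_n$, so $\dim(S/I_t)_n \leq \dim C_n$ for all $n$. Combined with the lower bound coming from the construction of $I_t$ (only two cubic relations, with the required degree-$4$ syzygy already supplied by Lemma \ref{lem:Hil}), this forces $H_{S/I_t}(s) = 1/(1-s)^3$. The main obstacle I expect is arranging $\sigma_t$ and the exponents so that the cubic relations really vanish for every $t$, rather than for only finitely many values: a purely diagonal ansatz with one copy of $\Z$ leads quickly to constraints like $\eta^6=1$ covering only finitely many $t$, so the construction will likely need to enrich the naive scaling, perhaps by letting $\sigma_t$ interact with the central elements $u_0,u_1,u_2$ of $C$, or by embedding the variables as non-monomial elements of $C\#\Z$.
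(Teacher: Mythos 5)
Your overall strategy --- realizing $S/I_t$ inside a smash product $C\#\Z$ via generators of the form (variable)$\,\#\,$(group element) --- is exactly the paper's, but the two steps you defer are the whole proof, and the way you propose to finish them would not work. First, the diagonal ansatz does succeed for every $t\in\C^*$, so your worry about constraints like $\eta^6=1$ forcing roots of unity is misplaced: with $\sigma(x)=\lambda^{-1}x$, $\sigma(y)=y$, $\sigma(z)=\lambda z$ and $x'=x\#1$, $y'=y\#\sigma$, $z'=z\#\sigma^{-1}$ one gets $z'x'y'=\lambda(zxy\#1)$, $x'y'z'=\lambda(xyz\#1)$, $y'z'x'=\lambda(yzx\#1)$, while $y'x'z'=yxz\#1$, $z'y'x'=zyx\#1$, $x'z'y'=xzy\#1$ pick up no $\lambda$. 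Since the Clifford relations force $zxy-yxz=xyz-zyx=yzx-xzy$ in $C$, the $\omega$-weighted combinations $(v_1)_{-\lambda}$ and $(v_2)_{-\lambda}$ evaluated at $x',y',z'$ vanish because $1+\omega+\omega^2=0$. This computation needs to be carried out, not flagged as a potential obstruction.

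Second, and more seriously, your Hilbert series argument is circular. The bound you extract from the $g$-weight decomposition only controls the dimension of the \emph{image} of $(S/I_t)_n$; to transfer it to $S/I_t$ itself you need $\phi_t$ to be injective, which is precisely what has to be proved --- a priori you only have a surjection from $S/I_t$ onto the subalgebra $Q$ generated by $x',y',z'$, which gives a lower bound on $\dim(S/I_t)_n$, not an upper bound. Moreover, ``distinct degree-$n$ monomials acquire distinct $g$-weights'' is unattainable: any two monomials with the same multidegree (e.g.\ $xy$ and $yx$) receive the same weight for every choice of exponents $a,b,c$. And your fallback lower bound from ``only two cubic relations plus the degree-$4$ syzygy of Lemma \ref{lem:Hil}'' controls nothing beyond degree $4$. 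The paper closes the gap differently: it shows that $\{f(x',y',z')\mid f\in W_k\}$ is a basis of $Q_k$ for a monomial basis $W_k$ of $C_k$ (using that the defining relations of $C$ preserve the multidegree, so each $f(x',y',z')$ equals a nonzero scalar times $f(x,y,z)\#\sigma^{b-c}$ and these remain independent), and that the defining relations of $Q$ as a quotient of the free algebra are concentrated in degrees $2$ and $3$ and already hold in $S/I_t$; hence $S/I_t\twoheadrightarrow Q$ is an isomorphism and $H_{S/I_t}(s)=H_C(s)=1/(1-s)^3$. Some version of this ``the image has only the expected relations'' step is indispensable and is missing from your proposal.
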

\subsection{Point modules of $T_t$}
In \cite{smithdegenerate} the point modules of $S$ were classified. We can classify the point modules of $T_t=S/I_t$ using these results. Recall that point modules of $S$ were determined by the following way: let $\mathbb{V}=\mathbf{V}(XYZ) \subset \PP^2$ be the union of 3 lines and let $q_0,q_1,q_2$ be the intersection points. Then a point module $P$ of $S$ depends on a point sequence $p_0 p_1 p_2 \ldots$ fulfilling the following requirements
\begin{itemize}
\item for every $i \in \N$, $p_i \in \mathbb{V}$,
\item if $p_i \neq q_j$ for any $j$, then $p_{i+1}$ is the intersection point not lying on the same line as $p_i$,
\item if $p_i = q_j$ for some $j$, then $p_{i+1}$ is any point on the line opposite of $q_j$.
\end{itemize}
If we write $P = \oplus_{n\in \N} \C e_n$, then the point $p_i= [a_i:b_i:c_i]$ corresponds defining an action of $S$ on $P$ by
\begin{eqnarray*}
x \cdot e_i = a_i e_{i+1}, & y \cdot e_i = b_i e_{i+1}, &z \cdot e_i = c_i e_{i+1}.
\end{eqnarray*}
\begin{theorem}
The point modules of $T_t$ for $t \neq 0,\infty$ are parametrized by 6 lines, call this set $\mathbb{W}$. The isomorphism $\phi$ induced by sending a point module $P$ to $P[1]$ on $\mathbb{W}$ is such that $\phi^2$ fixes the intersection points and sends each line of $\mathbb{W}$ to itself.
\end{theorem}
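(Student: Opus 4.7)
The plan is to refine Smith's classification of point modules of $S$ (recalled just above) by imposing the extra vanishing conditions coming from the two degree-$3$ generators $(v_1)_t,(v_2)_t$ of $I_t$. The first step is to translate each of $(v_1)_t\cdot e_i=0$ and $(v_2)_t\cdot e_i=0$ into trilinear conditions on the coordinates of three consecutive points $(p_i,p_{i+1},p_{i+2})$: using $x\cdot e_i=a_ie_{i+1}$ and its analogues, each relation becomes a linear combination of the six trilinear monomials $b_ia_{i+1}c_{i+2},\,c_ib_{i+1}a_{i+2},\,a_ic_{i+1}b_{i+2},\,c_ia_{i+1}b_{i+2},\,a_ib_{i+1}c_{i+2},\,b_ic_{i+1}a_{i+2}$, and taking their sum and difference (dividing the latter by the nonzero factor $\omega-\omega^2$) yields two scalar equations that together with the $S$-rules completely govern the triple.

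Next I would run a case analysis on the pair $(p_i,p_{i+1})$ according to the Smith classification: either $p_i$ is a regular point on some line $L_k$ with $p_{i+1}=q_k$, or $p_i=q_k$ and $p_{i+1}\in L_k$ is free. Direct substitution shows that in every allowed case $p_{i+2}$ is uniquely determined; concretely, if $p_i$ is regular on $L_k$ then $p_{i+2}\in L_k$ is the point with $b_ic_{i+2}+tc_ib_{i+2}=0$, while if $p_i=q_k$ then $p_{i+2}=q_k$ in both remaining subcases (whether $p_{i+1}$ is regular on $L_k$ or equal to one of the intersection points on $L_k$). Iterating these rules, one sees that $(p_0,p_1)$ extends to an infinite $T_t$-point module sequence iff it lies in
\[
\mathbb{W}:=\bigcup_{k=0}^{2}\bigl(L_k\times\{q_k\}\bigr)\cup\bigl(\{q_k\}\times L_k\bigr),
\]
a union of six lines in $\mathbb{V}\times\mathbb{V}$, and every such pair does extend, so $\mathbb{W}$ is precisely the variety of point modules.

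Set $A_k=L_k\times\{q_k\}$ and $B_k=\{q_k\}\times L_k$. By the rules above, the shift $\phi$ maps $(p_0,q_k)\in A_k$ to $(q_k,p_2)\in B_k$ and $(q_k,p_1)\in B_k$ to $(p_1,q_k)\in A_k$, so $\phi$ swaps $A_k$ with $B_k$ and $\phi^2$ preserves each of the six lines. Expressing $\phi^2|_{A_k}$ in the affine coordinate $\lambda=b/c$ on $L_k$ yields the M\"obius transformation $\lambda\mapsto -\lambda/t$ (the $B_k$-step being trivial in this coordinate); its fixed points $\lambda=0,\infty$ correspond to the two intersection points of $L_k$ other than $q_k$, and in $\mathbb{W}$ they become precisely the pairs $(q_{k'},q_k)\in A_k\cap B_{k'}$ and $(q_{k''},q_k)\in A_k\cap B_{k''}$ at which $A_k$ meets the other lines of $\mathbb{W}$. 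The symmetric computation on $B_k$ then shows that $\phi^2$ fixes all six intersection points of $\mathbb{W}$.

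The main obstacle is the case distinction in the second step: one must handle each combination of regular and intersection points on $\mathbb{V}$, verify uniqueness of $p_{i+2}$ and compatibility with the $S$-rules at every subsequent step, and confirm that the recursion never leaves $\mathbb{W}$. The individual checks are routine $2\times 2$ linear-algebra computations, but together they constitute the bulk of the work.
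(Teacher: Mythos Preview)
Your proposal is correct and follows essentially the same route as the paper: a case analysis on consecutive triples $(p_i,p_{i+1},p_{i+2})$ in a Smith point sequence, reducing via the $H_3$-action and extracting the single nontrivial constraint $\alpha\delta=-t\beta\gamma$ from the degree-$3$ relations. The paper's proof is terser---it does not spell out where the six lines sit, nor does it name the intersection points of $\mathbb{W}$ or compute $\phi^2$ as a M\"obius map---so your version fills in exactly the details the paper leaves to the reader; in particular your identification $\mathbb{W}=\bigcup_k (L_k\times\{q_k\})\cup(\{q_k\}\times L_k)\subset\mathbb{V}\times\mathbb{V}$ and the explicit formula $\lambda\mapsto-\lambda/t$ for $\phi^2$ are the natural way to make the statement precise.
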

\begin{proof}
Let $P$ be a point module of $T_t$ with associated point sequence $p_0 p_1 p_2 \ldots$. Let $p_i p_{i+1} p_{i+2}$ be a subtriple of this sequence.
\begin{itemize}
\item Assume that $p_i$ is one of the intersection points. Using the Heisenberg action, we may assume that $p_i = [1:0:0]$. Then $p_{i+1} = [0:\alpha:\beta]$.
\begin{itemize}
\item Assume that $[0:\alpha:\beta] \neq [0:1:0]$ and $[0:\alpha:\beta] \neq [0:0:1]$. Then $p_{i+2} = p_i$ and the relations of $S/(I_t)$ are trivially fulfilled.
\item Assume that $[0:\alpha:\beta] = [0:1:0]$. Then $p_{i+2} = [a:0:b]$. But from the degree 3 relations it follows that $b = 0$ and so $p_{i+2} = p_i$.
\item The case $[0:\alpha:\beta] = [0:0:1]$ is similar to the previous case.
\end{itemize}
\item Assume that $p_i$ is not one of the intersection points. Again using the Heisenberg-action, we may assume that $p_i = [0:\alpha:\beta]$, $p_{i+1}=[1:0:0]$ and $p_{i+2} = [0:\gamma:\delta]$. But then it follows from the degree 3 relations that
$ \delta \alpha = -t \gamma \beta$ or differently put, $\phi^2$ is an isomorphism of $\mathbf{V}(XYZ)$ fixing the intersection points.
\end{itemize}
\end{proof}
From the proof of this theorem we also notice that if $-t$ is primitive $n$th root of unity, then $\phi^{2n}$ is the identity. This implies that a point module $P$ of $T_t$ in this case parametrizes a $\C^*$-family of $2n$-dimensional simple representations of $T_t$.
\par The point module corresponding to $q_0 q_1 q_0 q_1 \ldots$ corresponds to the $\C^*$-family of 2-dimensional simple representations coming from the quotient $T_t/(z) = \C \langle x,y \rangle /(x^2,y^2)$. Similar results hold for the Heisenberg-orbit of this point module.
\subsection{The central element}
One of the many similarities we want for these new algebras is that there exists a central element of degree 3, fixed by the action of the Heisenberg group.
\begin{theorem}
For every element $t \in \Delta \subset \PP^1 \times \PP^1$ there exists a degree 3 central element in $(T_t)_3$ fixed by $H_3$.
\end{theorem}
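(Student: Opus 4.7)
The plan is to exhibit an explicit candidate for $c_3$ inside the $H_3$-invariant subspace of $S_3$ and reduce centrality in $T_t$ to a single commutator calculation by invoking $H_3$-equivariance.

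First I would pin down $(S_3)^{H_3}$. Since $x^2 = y^2 = z^2 = 0$ in $S$ kills every degree 3 monomial that repeats a letter, and since $e_2$-invariance forces the total index sum to vanish mod $3$, the only surviving degree 3 monomials are the six permutations of $xyz$. Under $e_1$ these split into two cyclic orbits of length $3$, giving the basis
$$u_1 = xyz + yzx + zxy, \qquad u_2 = xzy + zyx + yxz$$
of $(S_3)^{H_3}$. Because $(I_t)_3 = \C v_1 + \C v_2$ lives in $\chi_{1,0} \oplus \chi_{2,0}$, disjoint from the trivial isotypic component, $u_1$ and $u_2$ remain independent in $(T_t)_3$. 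For $t = ([A\!:\!B],[A\!:\!B]) \in \Delta$ I would then propose
$$c_3 \;=\; A\,u_1 + B\,u_2,$$
which is $H_3$-invariant by construction and has nonzero image in $(T_t)_3$ whenever $(A,B) \neq (0,0)$.

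For centrality, $H_3$-equivariance of both $c_3$ and $I_t$, together with the fact that $e_1$ cyclically permutes $\{x,y,z\}$, reduces the problem to showing $[x, c_3] \in (I_t)_4$. The core identity to verify in $S$ is
$$x(v_1 + v_2) - (v_1 + v_2)\,x \;=\; 2\,[x,c_3],$$
after which the conclusion is immediate since the left-hand side lies in $V\cdot(I_t)_3 + (I_t)_3\cdot V \subseteq (I_t)_4$. Using $\omega + \omega^2 = -1$ to rewrite $v_1 + v_2 = A(2zxy - xyz - yzx) + B(2yxz - zyx - xzy)$ and applying $x^2 = 0$ to kill four of the resulting monomials on each side, the difference collapses to $2A(xzxy - zxyx) + 2B(xyxz - yxzx)$, which is exactly $2A[x,u_1] + 2B[x,u_2] = 2[x,c_3]$.

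The main obstacle—really the decisive feature—is the cancellation of the cross-terms $xyzx$ and $xzyx$ in this identity: they appear with coefficients $\omega^2 - \omega$ in $[x, v_1]$ and $\omega - \omega^2$ in $[x, v_2]$, and hence cancel only when $v_1$ and $v_2$ are summed with equal weights in the two $\PP^1$-factors. This is precisely why the diagonal condition $A_1 = A_2$, $B_1 = B_2$ shows up here, mirroring its appearance in Lemma~\ref{lem:Hil}; off $\Delta$ no such centralizing combination exists. Everything else is symmetry: once $[x,c_3] \in (I_t)_4$ is established, applying $e_1$ yields the analogous statements for $[y,c_3]$ and $[z,c_3]$, so $c_3$ is central in $T_t$ and fixed by $H_3$.
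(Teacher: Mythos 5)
Your proof is correct and identifies exactly the element the paper uses: with $t=B/A$ your $c_3=Au_1+Bu_2$ is a scalar multiple of the paper's $g_t=(zxy+xyz+yzx)+t(yxz+zyx+xzy)$. The paper simply asserts centrality ``using for example MAGMA,'' whereas you supply the verification by hand, and your key identity $x(v_1+v_2)-(v_1+v_2)x=2[x,c_3]$ together with the $e_1$-equivariance reduction checks out. One small inaccuracy: the relations $x^2=y^2=z^2=0$ do \emph{not} kill every degree~3 monomial with a repeated letter (e.g.\ $xyx\neq 0$ in $S$); it is the $e_2$-weight condition that eliminates those survivors, so your identification of $(S_3)^{H_3}=\C u_1\oplus\C u_2$ still stands.
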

\begin{proof}
Using for example MAGMA, one checks that
$$
g_t = (zxy + xyz + yzx)+t(yxz+zyx+xzy)
$$
is central in $T_t$.
\end{proof}
One also checks that $g_t$ acts trivially on each point module. These observations show that the constructed quotients have indeed much in common with the AS-regular algebras, as each point module of the generic AS-regular algebra is also annihilated by the unique central element in degree 3.
\subsection{An analogue of the twisted coordinate ring}
We can now calculate the Hilbert series of $M_t = T_t/(g_t)$.
\begin{theorem}
The Hilbert series of $M_t$ is $\frac{1-t^3}{(1-t)^3}$ except if $t=0,\infty$.
\end{theorem}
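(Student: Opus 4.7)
The plan is to deduce the Hilbert series of $M_t$ from the regularity of the central element $g_t$. If $g_t$ is a non-zero-divisor on $T_t$, then multiplication by $g_t$ yields a graded short exact sequence of $T_t$-modules
\[
0 \longrightarrow T_t(-3) \xrightarrow{\,\cdot g_t\,} T_t \longrightarrow M_t \longrightarrow 0,
\]
and combining this with Theorem \ref{th:Hilbert} immediately gives
\[
H_{M_t}(s) \;=\; (1-s^3)\,H_{T_t}(s) \;=\; \frac{1-s^3}{(1-s)^3}.
\]
The entire problem therefore reduces to proving regularity of $g_t$ for $t \neq 0, \infty$.

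For the regularity, I would invoke the embedding $T_t \hookrightarrow C\#\Z$ announced in the preceding subsection and constructed in the following one. Since $g_t$ is central in $T_t$ and fixed by $H_3$, its image in $C\#\Z$ is central there too; tracking it through the embedding, I expect it to land (up to a grading shift in the $\Z$-factor) in the polynomial centre $\C[u_0,u_1,u_2]$ of the Clifford algebra $C$, and for $t \in \C^*$ this image should be nonzero. A nonzero element of a polynomial ring is a non-zero-divisor, so the image of $g_t$ is regular in $C\#\Z$, and regularity pulls back along the injective map to give regularity of $g_t$ in $T_t$. The hypotheses $t \neq 0, \infty$ enter precisely to ensure both the existence of the embedding and the nonvanishing of the image of $g_t$ in the polynomial centre.

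The main obstacle is thus the explicit identification of $g_t$ inside $C\#\Z$: one must work out the smash-product embedding for arbitrary $t \in \C^*$ and verify by direct computation that $g_t$ projects to a nonzero element of the commutative subring, a task that degenerates precisely at $t = 0, \infty$ where the embedding itself breaks down. The Clifford case $t = -1$ (treated in the lemma identifying $C$ with $T_{-1}$), combined with the $H_3$-equivariance of the whole setup, strongly suggests the correct form of the image in general and should streamline the computation.
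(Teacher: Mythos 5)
Your route is genuinely different from the paper's, and it is viable, but it contains one concrete error that needs repair. The paper proves this theorem \emph{directly}: adding $g_t$ to the relations lets one rewrite the degree-$3$ relations of $M_t$ as $zxy=-t\,yxz$, $xyz=-t\,zyx$, $yzx=-t\,xzy$, so that for every $t\in\C^*$ the algebra $M_t$ is a monomial algebra up to scalars with a $t$-independent monomial basis; anchoring at $t=-1$, where $M_{-1}$ is the quotient of the Clifford algebra $C$ by its odd central element, gives the Hilbert series for all $t\neq 0,\infty$. The regularity of $g_t$ is then deduced as a \emph{corollary} of this theorem together with Theorem \ref{th:Hilbert}. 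You invert this logic: prove regularity of $g_t$ first via the embedding $T_{-\lambda}\hookrightarrow C\#\Z$, then read off $H_{M_t}$ from the exact sequence $0\to T_t(-3)\to T_t\to M_t\to 0$. This is not circular, since Theorem \ref{th:Hilbert} is itself proved from the smash-product embedding without reference to $M_t$, and your version has the advantage of producing the regularity statement as input rather than output.

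The error: the image of $g_t$ in $C\#\Z$ cannot lie in $\C[u_0,u_1,u_2]$, since $g_t$ has odd degree $3$ while that subring sits in even degrees. A direct computation with $x'=x\#1$, $y'=y\#\sigma$, $z'=z\#\sigma^{-1}$ shows that $g_{-\lambda}$ maps to $3\lambda\,(zxy-yxz)\#1$, i.e.\ to a scalar multiple of the \emph{odd} generator $g=zxy-yxz$ of $Z(C)$ (note that $zxy-yxz=xyz-zyx=yzx-xzy$ in $C$). So the phrase ``a nonzero element of a polynomial ring is a non-zero-divisor'' does not apply as stated. The fix is short: $g$ is $\sigma$-invariant and $g^2$ is a nonzero scalar multiple of $u_0u_1u_2$, which is regular on $C$ because $C$ is free of rank $8$ over the domain $\C[u_0,u_1,u_2]$; hence $g$ is regular in $C$, hence $g\#1$ is regular in $C\#\Z$, and regularity pulls back to $g_t$ in $T_t$ along the injection. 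With that correction your plan closes.
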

\begin{proof}
Adding the relation $(zxy + xyz + yzx)+t(yxz+zyx+xzy)$, we can find an easier basis for the degree 3 relations of $M_t$. Taking linear combinations, we find
$$
\begin{cases}
zxy = -t yxz,\\
xyz = -t zyx, \\
yzx = -t xzy,
\end{cases}
$$
for $t \neq 0$. For $t=-1$, the corresponding algebra is a quotient of the Clifford algebra, so in this particular case the Hilbert series is correct. But it then follows that the Hilbert series is correct for all these cases, as all these algebras are monomial algebras with the same basis as the quotient of the Clifford algebra.
\end{proof}
\begin{corollary}
$g_t$ is not a zerodivisor of $T_t$.
\end{corollary}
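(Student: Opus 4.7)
The plan is to deduce the corollary from the two Hilbert series computations available to us, via the standard graded exact sequence associated to multiplication by a central element. Since $g_t$ is central (by the preceding theorem), left and right multiplication agree, and multiplication by $g_t$ gives a well-defined homogeneous map $T_t(-3) \to T_t$ of degree $0$ whose cokernel is, by definition, $M_t = T_t/(g_t)$.

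Concretely, letting $K$ denote the graded kernel of the map $\cdot g_t : T_t(-3) \to T_t$, one has the four-term exact sequence of graded vector spaces
\[
0 \longrightarrow K \longrightarrow T_t(-3) \xrightarrow{\;\cdot g_t\;} T_t \longrightarrow M_t \longrightarrow 0.
\]
Taking alternating sums of Hilbert series (in the formal variable $s$, to avoid clashing with the parameter $t$ of our family) yields
\[
H_K(s) \;=\; s^3 H_{T_t}(s) - H_{T_t}(s) + H_{M_t}(s).
\]
At this point I simply substitute the known Hilbert series: Theorem \ref{th:Hilbert} gives $H_{T_t}(s) = 1/(1-s)^3$ for $t \neq 0, \infty$, and the preceding theorem gives $H_{M_t}(s) = (1-s^3)/(1-s)^3$ in the same range. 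The two contributions cancel exactly, yielding $H_K(s) = 0$, hence $K = 0$, which is precisely the statement that $g_t$ is not a zerodivisor.

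There is essentially no obstacle here, since all the input is already in hand; the only mild point to flag is the need to justify that the kernels of left and of right multiplication by $g_t$ coincide, which is immediate from the centrality established in the previous theorem. The cases $t = 0, \infty$ are excluded because the ambient Hilbert series result of Theorem \ref{th:Hilbert} is only asserted there; no claim is made (or needed) about $g_t$ in those degenerate cases.
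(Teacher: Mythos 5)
Your proof is correct and is essentially the same as the paper's: the paper's entire proof reads ``This follows directly from Hilbert series considerations and from Theorem \ref{th:Hilbert},'' and your four-term exact sequence with the cancellation $s^3H_{T_t}(s) - H_{T_t}(s) + H_{M_t}(s) = 0$ is exactly the computation being alluded to. You have merely made the implicit argument explicit, including the (correct) remark that centrality of $g_t$ identifies the kernels of left and right multiplication.
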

\begin{proof}
This follows directly from Hilbert series considerations and from Theorem \ref{th:Hilbert}.
\end{proof}
\par We can find an easy basis for $M_t=T_t/(g_t)$ if $t \neq 0,\infty$.
\begin{lemma}
A monomial $w \in M_t$ is 0 iff any letter is both on an even and an odd place in $w$.
\end{lemma}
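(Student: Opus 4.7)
My plan splits into the two implications. For the direction where some letter does appear at both an even and an odd place, I would argue by strong induction on the minimal gap $j-i$ taken over all pairs of same-letter positions of opposite parity in $w$. In the base case $j-i=1$ the word already contains a factor $a^2$ and so vanishes. For the inductive step ($j-i\geq 3$, and hence odd), I would look at the three consecutive slots $w_{j-2}w_{j-1}w_j$; by minimality neither $w_{j-1}$ nor $w_{j-2}$ equals $w_j=a$, so either $w_{j-2}=w_{j-1}$ and $w$ already contains a square, or the three entries are a permutation of $\{x,y,z\}$ and one of the relations $xyz=-t\,zyx$, $yzx=-t\,xzy$, $zxy=-t\,yxz$ rewrites $w$ as a nonzero scalar multiple of a word in which $a$ has moved from position $j$ to position $j-2$ (still the parity opposite to $i$), shrinking the gap by $2$. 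Induction finishes this direction.

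For the converse I would exploit the Hilbert-series calculation already proved, $\dim (M_t)_n = 3n$. By the first implication, $(M_t)_n$ is spanned by the images of the ``good'' monomials (those with no letter at both parities), and whenever two good words differ by a single applicable degree-$3$ swap the corresponding relation in $M_t$ identifies them up to a factor $-t^{\pm 1}$. Hence $\dim_\C(M_t)_n$ is bounded above by the number of equivalence classes of good words under these swaps, and it suffices to verify this count is exactly $3n$, for then each class representative is forced to be nonzero.

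To count classes I would stratify good words by the disjoint pair $(S_o,S_e)\subseteq\{x,y,z\}^2$ of letters used at odd respectively even positions; the only possibilities for $n\geq 2$ are $(|S_o|,|S_e|)\in\{(1,1),(1,2),(2,1)\}$. The six alternating words of type $(1,1)$ admit no three consecutive distinct letters, so each forms an isolated class. In type $(1,2)$ with $S_o=\{a\}$, $S_e=\{b,c\}$ the only swaps available act on the length-$\lfloor n/2\rfloor$ sub-sequence of even-position letters as adjacent transpositions of distinct letters, which generate all permutations of each multiset; classes therefore correspond to multisets on $\{b,c\}$ of size $\lfloor n/2\rfloor$ using both letters, giving $\lfloor n/2\rfloor-1$ per choice of $a$ and $3(\lfloor n/2\rfloor-1)$ in total. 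The symmetric analysis of type $(2,1)$ contributes $3(\lceil n/2\rceil-1)$. Summing,
\[
6 + 3(\lfloor n/2\rfloor-1) + 3(\lceil n/2\rceil-1) = 3n,
\]
matching $\dim(M_t)_n$, so the class representatives form a basis and in particular every good word is nonzero in $M_t$.

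The one step that I expect to need a careful check is the claim that in types $(1,2)$ and $(2,1)$ the degree-$3$ relations act on the relevant sub-sequence as \emph{free} adjacent transpositions of distinct letters, with no interference from the relations $x^2=y^2=z^2=0$ or from the ideal generated by $g_t$ once we cross into other types; once this is established, the orbit-counting is routine combinatorics.
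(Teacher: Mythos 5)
Your proof is correct. The forward direction (a letter occurring at both an even and an odd place forces the monomial to vanish) is essentially the paper's argument: the paper likewise walks the repeated letter two places at a time through the alternating segment between its two occurrences until a square appears. Your induction on the minimal gap organizes this more cleanly and makes explicit why the two intermediate letters cannot equal $a$. The converse is where you genuinely diverge. The paper disposes of it in one sentence (``the three relations preserve the even and odd places of the variables, so $x^2,y^2,z^2$ can never occur''), which tacitly assumes that a monomial can only die by being rewritten into one containing a square; your argument instead combines the already-proved Hilbert series $\dim (M_t)_n=3n$ with an exact count of swap-equivalence classes of good words, which is more self-contained and, as a bonus, re-derives the explicit basis the paper describes immediately after the lemma. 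The worry you flag resolves easily: each degree-$3$ swap exchanges the letters in positions $j-2$ and $j$, which have the \emph{same} parity, so it preserves the multiset of letters on each parity class --- types cannot mix and no extra identifications arise --- while adjacent transpositions of distinct letters connect all arrangements of a two-letter multiset (bubble sort), so the class count is exactly the multiset count; any interference from $x^2=y^2=z^2=0$ or from the ideal $(g_t)$ could only push the dimension below the class count, which the Hilbert series lower bound forbids. The one caveat is that the paper's proof of the Hilbert series theorem for $M_t$ is itself informal (``these are monomial algebras with the same basis as the quotient of the Clifford algebra''), so your route leans on a statement whose justification arguably presupposes something close to this lemma; granting that theorem as stated, your argument is complete.
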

\begin{proof}
If $w$ is a monomial with not one variable on both an even and an odd place, then $w$ can never be 0, as the 3 relations preserve the even and odd places of the variables. So the combinations $x^2,y^2,z^2$ can never occur in $w$.
\par Suppose now that $x$ occurs on both an even and an odd place. There is then a submonomial of $w$ of the form $xzyzy\ldots yx$ or $xyzyz\ldots zx$. But then we can bring the last $x$ to place 2 in the submonomial, so we get $x^2$ and $w$ becomes 0. Similar results are true for $y$ and $z$.
\end{proof}
If we now take a monomial which is not 0 in $M_t$, then it follows that either all odd or all even places have the same variable, say $x$. But then we can `jump' over this variable to get 1 variable to the left and the other to the right. Fixing a lexicographical order $x>y>z$, we can therefore find a nice basis in the following way: take 2 variables, say for example $x,y$, take the basis of $\C[x,y]$ an put a $z$ between each variable. Put $z$ first on the odd places and then the even places. Apply this procedure 3 times (one for each variable), but remember to discard 6 elements (for $xzxz\ldots$, $yzyzyzy\ldots$ and others have been counted twice). For example, a basis for $(M_t)_4$ is given by
$$
\begin{array}{ccc}
xyxy & xyzy & zyzy, \\
xzxz & xzyz & yzyz, \\
yxyx & yxzx & zxzx, \\
yxyz & zxzy & xyxz.
\end{array}
$$
\par In the even cases, we then find a linearly independent set consisting of $3(\frac{n}{2}+1) + 3(\frac{n}{2}+1)-6 = 3n$ elements, so this is a basis in even degree. In odd degree, we find $3(\frac{n+1}{2}+1) + 3 (\frac{n-1}{2}+1) - 6 = 3n$, so also in this case the constructed linearly independent set is a basis.
\par The next question is to determine the center of $M_t$.
\begin{proposition}
$M_t$ has no central elements in odd degree.
\end{proposition}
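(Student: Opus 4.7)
The plan is to combine the $\Z^3$-grading of $M_t$, obtained by counting occurrences of $x$, $y$, $z$ separately, with the preceding lemma. Since every defining relation of $M_t$ is $\Z^3$-homogeneous, the center is $\Z^3$-graded, so it suffices to show that any $\Z^3$-homogeneous central element $c\in(M_t)_n$ of multidegree $(p,q,r)$ with $p+q+r=n$ odd must vanish. Set $o=(n+1)/2$ and $e=(n-1)/2$. By the preceding lemma, a basis monomial in multidegree $(p,q,r)$ is determined by a partition, namely a subset $S\subseteq\{x,y,z\}$ recording which letters sit on odd positions, subject to $\sum_{a\in S}p_a=o$. An arrangement within each parity block is unique up to scalar, because two letters of the same parity at adjacent block positions can be transposed at the cost of a factor $-t^{\pm 1}$ via a cubic relation $abc=-t^{\pm 1}cba$ applied to the triple they form with the intervening opposite-parity letter. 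Hence I may write $c=\sum_S\lambda_S m_S$ with $m_S$ a chosen representative for each valid partition $S$.

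A brief combinatorial check on subset sums shows that for a fixed multidegree there are at most three valid partitions, three being attained only in the multidegree $(1,1,1)$ of degree $3$ where all three $2$-subsets of $\{x,y,z\}$ are valid. Whenever two or three partitions are valid they are necessarily $2$-subsets of $\{x,y,z\}$ whose union is $\{x,y,z\}$, so for each valid $S$ I can pick a separating letter $a_S\in\{x,y,z\}\setminus S$ lying in every other valid partition; when only one partition is valid, any $a_S\notin S$ works. The key property of this choice is: if $a_S\in S'$, then multiplying $m_{S'}$ on either side by $a_S$ forces $a_S$ onto positions of both parities in the resulting monomial of length $n+1$, which therefore vanishes by the preceding lemma.

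Consequently the centrality equation $a_Sc=ca_S$ reduces to $\lambda_S(a_Sm_S-m_Sa_S)=0$. Since $a_S\notin S$, neither $a_Sm_S$ nor $m_Sa_S$ is zero. Left multiplication shifts every position by one, so the odd letters of $a_Sm_S$ form $E_S\cup\{a_S\}$, whereas $m_Sa_S$ keeps the original parities and its odd letters still form $S$; these two partition classes are disjoint, hence $a_Sm_S$ and $m_Sa_S$ are linearly independent basis elements of $(M_t)_{n+1}$. We conclude $\lambda_S=0$ for every valid $S$, giving $c=0$. The main obstacle is the combinatorial bookkeeping needed to guarantee the existence of the separating letter $a_S$ in every configuration of valid partitions, which is settled by the observation above about two-subsets whose union fills $\{x,y,z\}$.
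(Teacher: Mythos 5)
Your proof is correct, and while it rests on the same engine as the paper's argument --- the defining relations of $M_t$ preserve the parity of the positions occupied by each letter, so left- and right-multiplication by a generator produce monomials in different parity classes --- the surrounding reduction is genuinely different. The paper first uses the three surjections $M_t\twoheadrightarrow\C\langle x,y\rangle/(x^2,y^2)$, etc., to argue that every monomial of an odd-degree central element must involve all three variables (quoting, without proof, that these two-generator quotients have no odd central elements), and then runs the parity argument monomial by monomial. You instead exploit the $\Z^3$-grading to fix a multidegree, classify the nonzero monomials of that multidegree by their parity partition $S$, and use the separating-letter $a_S$ to annihilate all other terms of $c$ before comparing $a_Sm_S$ with $m_Sa_S$. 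This buys you two things: your argument is self-contained (no appeal to the centers of the two-generator quotients), and it explicitly rules out cancellation between distinct monomials of $c$ --- a point the paper's proof passes over rather quickly --- via the observation that any two valid partitions in a fixed multidegree are $2$-subsets covering $\{x,y,z\}$. The price is the extra combinatorial bookkeeping on subset sums, all of which checks out (in particular, for $n\geq 3$ two distinct valid partitions force all three multiplicities to be positive, so $a_S$ really does occur in $m_{S'}$ and kills it on both sides).
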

\begin{proof}
$M_t$ maps surjectively to the algebras $\C\langle y,z \rangle/(y^2,z^2)$, $\C\langle x,z \rangle/(x^2,z^2)$ and $\C\langle x,y \rangle/(x^2,y^2)$. This means that any central element of odd degree, say $w$, has to have all variables in its monomials, as each of these quotients doesn't have central elements in odd degree, so $w$ has to belong to the kernel of each of these algebra morphisms. So say that $w$ has as one of its monomials $xyxy\ldots x z x$, with all $x$ on the odd places. For $w$ to belong to the center, we need to get the first $y$ in $y xyxy\ldots x z x$ to the last place. This is however impossible, as the first $y$ is in a odd place and the last place is even. Similar results hold for monomials of the form $yxyx \ldots zy$ and $zxzx \ldots yz$. For monomials of the form $yx\ldots xz$ with $x$ on every even place, we have to get the first $x$ in $xyxyx\ldots xz$ to the last place, but again this is impossible as the first place is odd and the last place is even. Similar results hold for $xy \ldots yz$ ($y$ at even place) and $yz\ldots zx$ ($z$ at even place). This means that $w$ does not contain any monomials, so $w=0$.
\end{proof}
\begin{theorem}
If $-t$ is a primitive $n$th root of unity, then the elements $(x+y)^{2n}, (x+z)^{2n}$ and $(y+z)^{2n}$ generate the center.
\end{theorem}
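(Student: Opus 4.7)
The plan is to show that every central element of $M_t$ is a polynomial in $e_1 = (x+y)^{2n}$, $e_2 = (x+z)^{2n}$, $e_3 = (y+z)^{2n}$. Centrality of the generators is direct: using $x^2 = y^2 = 0$, one has $(x+y)^{2n} = (xy+yx)^n = (xy)^n + (yx)^n$ (the cross terms $(xy)(yx)$ and $(yx)(xy)$ vanish). Iterating $zxy = -tyxz$ and $xyz = -tzyx$ gives $z(xy)^n = (-t)^n(yx)^n z$ and $(xy)^n z = (-t)^n z(yx)^n$, and under the hypothesis $(-t)^n = 1$ this yields $z \cdot e_1 = e_1 \cdot z$. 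Commutation with $x$ and $y$ follows from $x^2 = y^2 = 0$: for instance $x(xy)^n = 0$ and $x(yx)^n = (xy)^n x$, so $x e_1 = (xy)^n x = e_1 x$. Symmetric arguments (or the $H_3$-action cycling $x \to z \to y \to x$) handle $e_2$ and $e_3$.

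Next I would identify the defining relation in $M_t$ among the generators. A monomial-support computation shows $e_1 e_2 e_3 = 0$: expanding the product into $8$ terms of the form $(w_1)^n (w_2)^n (w_3)^n$ with $w_i \in \{ab,ba\}$ for appropriate pairs, one verifies that each such term has some letter appearing at both an even and an odd position, and hence vanishes by the preceding monomial lemma. Combined with algebraic independence of any two of $e_1, e_2, e_3$ (verified by comparing the distinct support types of their powers $e_i^k$ in the basis of $M_t$), this forces $R := \C[e_1, e_2, e_3] \cong \C[u,v,w]/(uvw)$, a two-dimensional hypersurface subring of $Z(M_t)$ with Hilbert series $H_R(s) = (1-s^{6n})/(1-s^{2n})^3$.

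Finally I would match $H_R$ with $H_{Z(M_t)}$. The point-module analysis together with the promised embedding $T_t \hookrightarrow C\#\Z$ implies that $M_t$ has PI-degree $2n$ and is therefore a finite module over $Z(M_t)$ of generic rank $4n^2$; in particular $Z(M_t)$ has GK-dimension $2$. Using $H_{M_t}(s) = (1-s^3)/(1-s)^3$, this rank, and the description of the non-Azumaya locus (for $n=1$ this is the Clifford discriminant cut out by $e_1 e_2 e_3 = 0$, and the same pattern is expected for general $n$), one computes $H_{Z(M_t)}(s)$ and checks it equals $H_R(s)$; containment $R \subseteq Z(M_t)$ combined with the Hilbert series match then yields $R = Z(M_t)$. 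The main obstacle is this last bookkeeping step, since $M_t$ is in general not free over $R$ (already for $n=1$ the ratio $H_{M_t}/H_R$ fails to be a polynomial); a cleaner alternative is to exhibit an explicit basis of $M_t$ as a graded $R$-module adapted to the $H_3$-isotypic decomposition, from which the $R$-module structure of the centre can be read off directly.
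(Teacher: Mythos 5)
Your verification that $e_1=(x+y)^{2n}$, $e_2=(x+z)^{2n}$, $e_3=(y+z)^{2n}$ are central is correct and essentially the same as the paper's (explicit commutation plus the Heisenberg action), and the relation $e_1e_2e_3=0$ is also used by the paper. The problem is the generation step, which is where the real content lies, and there your argument has a genuine gap. You propose to compute $H_{Z(M_t)}(s)$ from the PI-degree $2n$, a ``generic rank $4n^2$,'' and the non-Azumaya locus, and then match it against $H_R(s)$. None of these inputs determines the Hilbert series of the centre: they constrain $Z(M_t)$ only up to integral extensions and say nothing about possible central elements in even degrees below $2n$ or in degrees not divisible by $2n$. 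Worse, in the paper's own development the statement ``the PI-degree of $M_t$ is $2n$'' is \emph{derived from} this theorem (the lower bound comes from the centre having no elements of degree $<2n$), so invoking the PI-degree here is circular; and finiteness of $M_t$ over its centre is likewise proved afterwards, using the generators found here. You acknowledge the obstacle yourself and fall back on an unexecuted alternative (an explicit $R$-module basis of $M_t$), so the proof is not complete.

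For comparison, the paper closes the gap combinatorially. Since there are no central elements of odd degree, a central element of $M_t$ lies in the second Veronese $P_t=M_t^{(2)}$, which is generated by the six monomials $xy,yx,xz,zx,yz,zy$ satisfying $q$-commutation relations (with $q=-t$ a primitive $n$th root of unity) and vanishing products such as $(xy)(yx)=0$; this forces any central element of $P_t$ to be a polynomial in the six $n$th powers $(xy)^n,(yx)^n,\dots$. Then conjugating a monomial such as $(xy)^{na}(xz)^{nb}$ by the generator $y$ shows it can occur in a central element of $M_t$ only together with its partner $(yx)^{na}(zx)^{nb}$, with equal coefficient, and the paired sum is exactly $(xy+yx)^{na}(xz+zx)^{nb}=e_1^a e_2^b$. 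That pairing argument is the missing ingredient in your proposal; if you want to salvage your route, you would need to actually produce the graded $R$-module basis of $M_t$ you allude to, which amounts to redoing this combinatorics anyway.
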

\begin{proof}
It is clear that these 3 elements belong to the center of $M_t$, as we have that $$(x+y)^2x = x(x+y)^2,(x+y)^2y = y(x+y)^2,(x+y)^{2n}z = z(x+y)^{2n}.$$ The other claimed elements are central by the fact that the center is stable under the action of the Heisenberg group.
\par Consider the second Veronese subalgebra $P_t=M_t^{(2)}$ with generators $xy,yx,xz,zx,yz,zy$. We then have 
$$\begin{cases}
(xy)(yx) = 0 ,\\
(xy)(xz) = (-t)^{-1} (xz)(xy),\\
(xy)(zx) = 0, \\
(xy)(yz) = 0,\\
(xy)(zy) = (-t) (zy)(xy),
\end{cases}
$$
together with Heisenberg orbits (in particular, any word consisting of 3 different couples is necessarily 0). It then follows that the center of $P_t$ is generated by these 6 monomials of degree 2 to the $n$th power. Take a degree $2n$ central element $w$ of $M_t$ and suppose it contains the monomial $(xy)^{na}(xz)^{nb}$. We then have
$$
y(xy)^{na}(xz)^{nb} = (yx)^{na} y (xz)^{nb} = (yx)^{na} (zx)^{nb} y.
$$
We find that this monomial can only be in $w$ if and only if the monomial $(yx)^{na} (zx)^{nb}$ also occurs in $w$, with the same coefficient. But this sum of 2 monomials is equal to
$$
(xy+yx)^{na}(xz+zx)^{nb}.
$$
Similar results hold for the other monomials, so we are done.
\end{proof}
From now on, assume that $-t$ is a primitive $n$th root of unity. We have $(xy+yx)^{n}(yz+zy)^{n}(zx+xz)^{n}=0$, so the dimension of the center is $\leq 2$.
\begin{theorem}
$M_t$ is a finite module over its center.
\end{theorem}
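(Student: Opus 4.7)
The plan is to show that $M_t$ is finitely generated as a module over the subalgebra $Z'\subseteq Z(M_t)$ generated by the three central elements $c_1=(x+y)^{2n}$, $c_2=(y+z)^{2n}$, $c_3=(x+z)^{2n}$; finiteness over $Z(M_t)$ follows a fortiori. The first step is to simplify these inside $M_t$. Because $x^2=y^2=z^2=0$ and consequently $(xy)(yx)=(yx)(xy)=0$, all mixed terms in the expansion of $(xy+yx)^n$ vanish, leaving $c_1=(xy)^n+(yx)^n$; similarly $c_2=(yz)^n+(zy)^n$ and $c_3=(xz)^n+(zx)^n$.

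Next I will exploit the canonical form for nonzero monomials. From the defining relations one reads off the $q$-commutation $(xy)(xz)=-t^{-1}(xz)(xy)$ (starting from $yxz=-t^{-1}zxy$), and its five $H_3$-images. Combined with the basis lemma above, every nonzero even-length monomial of $M_t$ is, up to scalar, of one of the six canonical forms $(ab)^{k}(ac)^{m-k}$ or $(ba)^{k}(ca)^{m-k}$, where $(a,b,c)$ is a permutation of $(x,y,z)$ and $0\le k\le m$.

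The heart of the argument is a reduction step that eats $2n$ letters at the cost of one central factor. Take $w=(xy)^{a}(xz)^{b}$ with $a+b=m$. If $a\ge n$, then
\[ c_{1}\cdot (xy)^{a-n}(xz)^{b} \;=\; (xy)^{a}(xz)^{b} \;+\; (yx)^{n}(xy)^{a-n}(xz)^{b} \;=\; w, \]
because in the $(yx)^n$-summand the letter $x$ is forced to occur at positions of both parities (at the junction $yx\!\cdot\! xy$ if $a>n$, or at the later junction $yx\!\cdot\! xz$ if $a=n$ and $b\ge 1$), so that summand vanishes by the basis lemma. If instead $b\ge n$, right multiplication by $c_{3}$ gives the symmetric identity, and the same calculation transported by $H_3$ handles each of the other five canonical families. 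Since $a+b=m\ge 2n$ forces $\max(a,b)\ge n$, every canonical monomial of length $\ge 4n$ factors as $c_{i}w'$ with $w'$ of length $2m-2n$.

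Iterating this reduction, every even-length element of $M_t$ lies in $Z'\cdot G_{\mathrm{even}}$, where $G_{\mathrm{even}}$ is the (finite) set of nonzero monomials of even length $<4n$. A monomial of odd length $\ge 4n+1$ can be written as $w=w_0\cdot v$ with $v\in\{x,y,z\}$ and $|w_0|\ge 4n$ even; applying the even-length reduction to $w_0$ places $w$ in $Z'\cdot G_{\mathrm{odd}}$, where $G_{\mathrm{odd}}$ is the finite set of nonzero monomials of odd length $<4n+1$. Hence $M_t=Z'\cdot (G_{\mathrm{even}}\cup G_{\mathrm{odd}})$ is finitely generated over $Z'\subseteq Z(M_t)$. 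The main point of care is the degenerate case of a pure two-variable word like $(xy)^m$: here $c_1\cdot 1\ne (xy)^n$, so one cannot reduce $(xy)^n$ or $(yx)^n$ to $1$ directly. This is precisely why the generating set is chosen at length $<4n$ rather than $<2n$: three-variable canonical monomials always have $a,b\ge 1$, so the vanishing juxtaposition is available, while the pure two-variable endpoints $(xy)^n,(yx)^n,\ldots$ must remain among the finitely many generators.
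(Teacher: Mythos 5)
Your proof is correct, but it follows a genuinely different route from the paper's. The paper factors the problem through the second Veronese subalgebra $P_t=M_t^{(2)}$: it notes that $M_t$ is finite over $P_t$, that $P_t$ is finite over its center (generated by the six elements $(xy)^n,(yx)^n,(xz)^n,(zx)^n,(yz)^n,(zy)^n$), and that $Z(P_t)$ is in turn a finite module over $Z(M_t)$ via identities such as $u^av'^b=(u+u')(v+v')(u^{a-1}v'^{b-1})$ — identities whose validity rests on exactly the vanishing of cross-terms that you exploit. You bypass the Veronese entirely and produce an explicit finite generating set (the nonzero monomials of length $<4n+1$) for $M_t$ over the subring generated by the three known central elements, using the canonical form $(ab)^k(ac)^{m-k}$ for nonzero monomials and the absorption identity $c_1\cdot(xy)^{a-n}(xz)^b=(xy)^a(xz)^b$. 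The combinatorial engine is the same in both arguments — the parity obstruction of the basis lemma kills every cross-term — but your version is more self-contained and more careful at the degenerate endpoints: the paper's displayed identity literally fails at $a=b=1$, where the cross-term $u'v=(yx)^n(zx)^n$ does \emph{not} vanish (one must enlarge the generating set of $Z(P_t)$ over $Z(M_t)$, or argue modulo the central element $uv'+u'v$), a slip that your explicit treatment of the pure two-variable words $(xy)^n,(yx)^n,\ldots$ avoids. What the paper's route buys is the intermediate structure of $P_t$ and its center, which it also leans on to prove the preceding theorem identifying $Z(M_t)$; what yours buys is an effective bound on the generators of $M_t$ as a $Z(M_t)$-module and a proof that needs nothing beyond the basis lemma and the $q$-commutation of couples.
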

\begin{proof}
$M_t$ is a finite module over $P_t$, which in turn is a finite module over its center. The center of $P_t$ is generated by $u=(xy)^n,u'=(yx)^n,v'=(xz)^n,v=(zx)^n,w=(yz)^n,w'=(zy)^n$. The center of $M_t$ is then equal to $u+u',v+v',w+w'$. Now, $Z(P_t)$ is generated as a module over $Z(M_t)$ by the elements $1,u,u',v,v',w,w'$, as we have for example
$$
u^av'^b = (u+u')(v+v')(u^{a-1}v'^{b-1})
$$
so by induction we conclude that $M_t$ is a finite module over its center.
\end{proof}
\begin{corollary}
The dimension of the center is 2.
\end{corollary}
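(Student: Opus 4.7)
The plan is to establish the matching lower bound, since the paragraph preceding the theorem on $M_t$ being a finite module over its center already gives the upper bound: the identity $(xy+yx)^n(yz+zy)^n(zx+xz)^n=0$ shows that $Z(M_t)$ is a quotient of $\C[\alpha,\beta,\gamma]/(\alpha\beta\gamma)$, hence $\dim Z(M_t)\leq 2$.

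First, I would read off the Gelfand--Kirillov dimension of $M_t$ from the Hilbert series computed just above. We have
\[
H_{M_t}(\tau)=\frac{1-\tau^3}{(1-\tau)^3}=\frac{1+\tau+\tau^2}{(1-\tau)^2},
\]
so $\dim(M_t)_n$ grows linearly in $n$ (in fact equals $3n$ for $n\geq 1$). This gives $\mathrm{GKdim}(M_t)=2$.

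Next, I would invoke the previous theorem that $M_t$ is a finite module over $Z(M_t)$. Since $Z(M_t)$ is a finitely generated commutative $\C$-algebra (being generated by the three explicit elements $u+u'$, $v+v'$, $w+w'$), its Gelfand--Kirillov dimension coincides with its Krull dimension, i.e.\ with the dimension of $\mathrm{Spec}\, Z(M_t)$. Because GK-dimension is preserved under passage to a finitely generated module, we have $\mathrm{GKdim}(Z(M_t))=\mathrm{GKdim}(M_t)=2$, hence $\dim Z(M_t)\geq 2$.

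Combining the two bounds yields $\dim Z(M_t)=2$. No step is really an obstacle here; the only thing to be careful about is the invocation of the standard fact that GK-dimension is invariant under finite module extensions and agrees with Krull dimension in the commutative finitely generated setting. As a byproduct, one sees that in fact $Z(M_t)\cong \C[\alpha,\beta,\gamma]/(\alpha\beta\gamma)$ — the relation that produced the upper bound is essentially the only one — so $\mathrm{Spec}\, Z(M_t)$ is the union of three coordinate planes in $\A^3$.
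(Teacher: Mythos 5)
Your argument is correct and is essentially the paper's own proof made explicit: the paper's one-line justification (``if the dimension were $<2$, then $M_t$ would not be a finite module over its center'') is exactly the GK-dimension comparison you spell out, combined with the upper bound from $(xy+yx)^n(yz+zy)^n(zx+xz)^n=0$. One caveat on your closing aside only: the dimension count does not by itself show that $\alpha\beta\gamma=0$ is the \emph{only} relation (a further quotient such as $\C[\alpha,\beta,\gamma]/(\gamma)$ still has dimension $2$), but that claim is not needed for the corollary.
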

\begin{proof}
We know that the dimension of the center is $\leq 2$. If it was $< 2$, then $M_t$ would not be a finite module over its center.
\end{proof}

We can now give a description of the simple representations of $M_t$.
\begin{theorem}
The PI-degree of $M_t$ is $2n$.
\end{theorem}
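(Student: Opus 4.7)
\emph{Proof plan.} My strategy is to establish matching bounds, showing that the PI-degree of $M_t$ is at least $2n$ and at most $2n$.

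For the lower bound, I invoke the point module analysis of the preceding subsection. When $-t$ is a primitive $n$th root of unity, the shift automorphism $\phi$ acting on the variety $\mathbb{W}$ of point modules of $T_t$ has order $2n$, so every point module is $2n$-periodic and, as was remarked earlier, parametrizes a $\C^{*}$-family of $2n$-dimensional simple $T_t$-modules. Because $g_t$ acts trivially on each point module, these simples factor through $M_t=T_t/(g_t)$, yielding simple $M_t$-modules of dimension $2n$; hence the PI-degree of $M_t$ is at least $2n$.

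For the upper bound, I exploit that $M_t$ is a finite module over its two-dimensional center $Z:=Z(M_t)$, whose generators were shown to be $U=(x+y)^{2n}$, $V=(y+z)^{2n}$, $W=(x+z)^{2n}$ satisfying $UVW=0$. I would first verify that this is the only relation: in the second Veronese $P_t$, each two-variable subalgebra $\C[U,V]$, $\C[V,W]$, $\C[U,W]$ of $Z$ is polynomial, since the expansions $U^aV^b=(xy)^{an}(xz)^{bn}+(yx)^{an}(zx)^{bn}$ (and their cyclic analogues) are sums of two distinct nonzero monomials of $P_t$ and are therefore linearly independent for different exponents. Hence $Z\cong\C[U,V,W]/(UVW)$ with Hilbert series $H_Z(s)=(1-s^{6n})/(1-s^{2n})^3$. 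Combining this with $H_{M_t}(s)=(1+s+s^2)/(1-s)^2$ and taking the limit,
$$\lim_{s\to 1^{-}} \frac{H_{M_t}(s)}{H_Z(s)} = \frac{3\cdot(2n)^3}{6n} = 4n^2,$$
which represents the generic rank of $M_t$ as a $Z$-module. By Posner's theorem this generic rank equals the square of the PI-degree, forcing the PI-degree to be at most $2n$.

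The chief obstacle is that $\mathrm{Spec}(Z)$ is not irreducible but is the union of three planes $\mathbf{V}(U)$, $\mathbf{V}(V)$, $\mathbf{V}(W)$ meeting at the origin, so Posner's theorem must be applied to each of the three prime quotients separately. The Heisenberg group cyclically permutes these three components, so by symmetry they carry the same generic rank, and the Hilbert series ratio captures this common value; this makes the upper bound uniform across all components and completes the argument.
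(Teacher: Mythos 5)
Your lower bound is correct: the periodic point modules of $T_t$ give a $2$-parameter family of $2n$-dimensional simples annihilated by $g_t$, hence simple $M_t$-modules of dimension $2n$. Note that this is exactly the family the paper itself constructs, but the paper uses it for the \emph{other} inequality (a dense family of $2n$-dimensional simples must meet the Azumaya locus, where simple dimension equals PI-degree), and gets the lower bound more cheaply from the fact that the central generators have degree $2n$ together with Cayley--Hamilton. Your upper bound, via the structure of $Z=Z(M_t)$ and the leading term of $H_{M_t}/H_Z$, is a genuinely different route --- and it is where the gap lies.

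Posner's theorem is a statement about prime rings, and $M_t$ is not prime ($UVW=0$ in its center). Your repair --- apply Posner separately over the three planes $\mathbf{V}(U),\mathbf{V}(V),\mathbf{V}(W)$ --- only bounds the dimension of a simple module $S$ when the minimal prime $P\subseteq\mathrm{Ann}(S)$ of $M_t$ contracts to one of the three \emph{minimal} primes of $Z$. Nothing in your argument forces this: a prime of $M_t$ may sit over one of the three lines or over the origin, and there the relevant fiber $M_t\otimes_Z k(\mathfrak q)$ can be strictly larger than the generic rank $4n^2$ (fiber dimension is only upper semicontinuous), so the Hilbert-series ratio gives no control. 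This is not a hypothetical worry here: $(z)$ is a prime ideal of $M_t$ with $M_t/(z)\cong\C\langle x,y\rangle/(x^2,y^2)$ and $(z)\cap Z$ equal to the line $\mathbf{V}(V,W)$; its simples happen to be $2$-dimensional, consistent with the theorem, but your argument proves no bound at all for primes of this kind. Two smaller points: the identification $Z\cong\C[U,V,W]/(UVW)$ with exactly that one relation is essential for your $H_Z$ and is only sketched (it does follow from the parity lemma for monomials of $M_t$, since $U^aV^b=(xy)^{na}(xz)^{nb}+(yx)^{na}(zx)^{nb}$ is a sum of two distinct nonzero basis monomials); and the limit $4n^2$ is a multiplicity ratio over a reducible base, which equals the common componentwise rank only after the Heisenberg symmetry argument you invoke. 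The cheapest complete repair inside the paper's own toolkit is the later embedding $T_{-\lambda}\hookrightarrow C\#\Z_n$: the smash product has PI-degree $2n$, and polynomial identities pass to subrings and quotients, so every simple $M_t$-module has dimension at most $2n$.
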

\begin{proof}
As the elements of the center of smallest degree are of degree $2n$, it follows from the Cayley-Hamilton polynomial that the PI-degree of $M_t$ is at least $2n$. If we can now find an open subset of $\wis{Spec}(Z(M_t))$ with $2n$-dimensional simple representations, we are done. However, by determining the point modules of $T_t$ and the observation that each point module is annihilated by $g_t$, we have indeed found a 2-dimensional family of $2n$-dimensional simple representations of $M_t$, as the induced automorphism $\phi$ on the point variety of $M_t$ coming from the shift functor of $\wis{Proj}(M_t)$ has the property $\phi^{2n}= Id$.

\end{proof}

\subsection{Connection with the Clifford algebra}
Let $\lambda \in \C^*$, for the moment not a root of unity. Define an action of the infinite cyclic group $\Z = C_\infty = \langle \sigma \rangle $ on the Clifford algebra $C$ by $$
\sigma(x) = \lambda^{-1}x, \sigma(y) = y,\sigma(z) = \lambda z
$$
and take the smash product $Q=C \# \Z$, which is graded by the classic grading on $C$ and $\deg(\sigma) = 0$. Consider the elements $x'=x \# 1, y'=y \# \sigma, z'= z\# \sigma^{-1}$. We then have
\begin{gather*}
z'x'y' = (z\# \sigma^{-1})(x\# 1)(y\# \sigma) = \lambda (zxy \# 1), \\
x'y'z' = (x\# 1)(y\# \sigma)(z\# \sigma^{-1}) = \lambda (xyz \# 1), \\
y'z'x' = (y\# \sigma)(z\# \sigma^{-1})(x\# 1) = \lambda (yzx \# 1), \\
y'x'z' = (y\# \sigma)(x\# 1)(z\# \sigma^{-1}) = (yzx \# 1),\\
z'y'x' = (z\# \sigma^{-1})(y\# \sigma)(x\# 1) = (zyx \# 1), \\
x'z'y' = (x\# 1)(z\# \sigma^{-1})(y\# \sigma) = (xzy \# 1).
\end{gather*}
This means that $x',y',z'$ are solutions for the relations of $T_{-\lambda}$.
\begin{proposition}
The Hilbert series of the algebra $Q$ generated by $x',y',z'$ is equal to $\frac{1}{(1-t)^3}$ and all the relations holding in this algebra come from degree 3 relations that hold in $T_{-\lambda}$.
\end{proposition}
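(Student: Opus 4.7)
The computation preceding the proposition shows that $x', y', z'$ satisfy the defining relations of $T_{-\lambda}$, so the assignment $x \mapsto x'$, $y \mapsto y'$, $z \mapsto z'$ extends to a well-defined surjection $\pi: T_{-\lambda} \twoheadrightarrow Q$. The proposition then amounts to (i) showing $H_Q(t) = \frac{1}{(1-t)^3}$ and (ii) upgrading $\pi$ to an isomorphism, which is precisely the statement that every relation of $Q$ is generated by degree-$3$ elements that hold in $T_{-\lambda}$.

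For (i), my plan is to exhibit a graded linear bijection $Q_n \cong C_n$ and then invoke Theorem 5.1. The key observation is that $C$ carries a $\mathbb{Z}^3$-grading by letter counts $(n_x, n_y, n_z)$, because both of its defining relations $x_i^2 = 0$ and $[\{x_i, x_{i+1}\}, x_{i+2}] = 0$ are multi-homogeneous in each variable. Iterating the smash product identity, every word $m'$ of length $n$ in the letters $x', y', z'$ equals a nonzero scalar (a monomial in $\lambda^{\pm 1}$ depending on the ordering of the letters) times $m \# \sigma^{n_y(m) - n_z(m)}$ inside $C \# \Z$, where $m$ is the corresponding word in $x, y, z$ and $n_x(m), n_y(m), n_z(m)$ are its letter counts. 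For each $k$, the $\sigma^k$-component of $Q_n$ is therefore identified, up to these nonzero scalings, with the sum of the multi-graded pieces $C_{(n_x, n_y, n_z)}$ of $C_n$ with $n_y - n_z = k$. Summing over $k$ recovers all of $C_n$, so $\dim Q_n = \dim C_n$, and Theorem 5.1 gives $H_C(t) = \frac{1}{(1-t)^3}$.

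For (ii), the surjection $\pi$ combined with (i) yields $H_{T_{-\lambda}}(t) \geq \frac{1}{(1-t)^3}$ coefficient-wise, while Lemma \ref{lem:Hil} supplies the reverse inequality up to degree $4$, making $\pi$ bijective in those degrees. The main obstacle is extending the upper bound $\dim T_{-\lambda,n} \leq \binom{n+2}{2}$ to all $n$. My plan is to lift a basis of $Q_n \cong C_n$ back through $\pi$: each chosen basis element of $C_n$ corresponds to a word $m'$ in $Q_n$, whose preimage word $m$ in $T_{-\lambda}$ is a natural candidate basis vector. Using the defining relations $x^2, y^2, z^2, (v_1)_{-\lambda}, (v_2)_{-\lambda}$ inductively to rewrite an arbitrary monomial as a combination of these candidates, one verifies that they span $T_{-\lambda, n}$, forcing $\pi$ to be an isomorphism and so establishing that the relations of $Q$ come from degree-$3$ relations of $T_{-\lambda}$.
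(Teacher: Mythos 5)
Your part (i) is correct and is essentially the paper's own argument in different clothing: the paper fixes a monomial basis $W_k$ of $C_k$, observes that $f(x',y',z')=\lambda^{\mu}f\#t^{\,b-c}$ with $b-c$ and $\mu$ determined by the letter counts, and uses the fact that the defining relations of $C$ preserve the number of occurrences of each variable to conclude that $\{f(x',y',z')\mid f\in W_k\}$ is a basis of $Q_k$. Your reformulation via the $\Z^3$-grading of $C$ is the same mechanism, and it is sound.

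Part (ii) is where you diverge from the paper, and there is a genuine gap. You reduce the second assertion to the upper bound $\dim (T_{-\lambda})_n\le\binom{n+2}{2}$, to be obtained by lifting a basis of $C_n$ and then ``verifying'' by inductive rewriting that the lifted words span $(T_{-\lambda})_n$. That spanning claim is precisely the hard content: it is equivalent to Theorem \ref{th:Hilbert}, which the paper explicitly postpones until after this proposition and deduces \emph{from} it. A direct rewriting argument in $T_{-\lambda}$ would amount to establishing confluence of a noncommutative rewriting system, and nothing guarantees in advance that the reductions close up without producing extra relations --- indeed this is exactly what fails at $t=0,\infty$, where the Hilbert series explodes. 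So the one step you leave to verification is the step the whole smash-product construction exists to circumvent, and as written your plan is circular in spirit with the paper's architecture (you need Theorem \ref{th:Hilbert} as input, whereas the paper obtains it as output).

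The paper's route for the second assertion needs no bound on $\dim(T_{-\lambda})_n$ at all. Work with the kernel $K$ of $\C\langle x,y,z\rangle\to\mathcal{C}$, $x\mapsto x'$, etc. By the same letter-count bookkeeping as in part (i), a multihomogeneous element $\sum c_w w$ lies in $K$ if and only if $\sum c_w\lambda^{\mu(w)}w$ lies in the relation ideal $I$ of $C$; the rescaling $w\mapsto\lambda^{\mu(w)}w$ is compatible with left and right multiplication by monomials up to nonzero scalars (it is a cocycle twist of the $\Z^3$-grading), so it carries generating sets of $I$ to generating sets of $K$. Since $I$ is by definition generated by $x^2,y^2,z^2$ and the two degree-3 commutators, $K$ is generated by their rescalings, which are exactly $x^2,y^2,z^2$ and a basis of $\C(v_1)_{-\lambda}+\C(v_2)_{-\lambda}$. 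Hence $Q\cong T_{-\lambda}$ and the Hilbert series statement for $T_{-\lambda}$ follows, rather than being presupposed.
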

\begin{proof}
The algebra $\mathcal{C}=C \# \Z$ is defined by generators and relations by
$$
\mathcal{C}=\C\langle x,y,z ,t ,t^{-1} \rangle/(x^2,y^2,z^2,[\{x,y\},z],[\{y,z\},x],tx - \lambda^{-1}xt,ty - yt,tz - \lambda zt).
$$
We give $\mathcal{C}$ the gradation determined by $\deg(x)=\deg(y)=\deg(z)=1,\deg(t)=0$. Then a basis in degree $k$ is determined by fixing a monomial basis $W_k$ for $C$ in degree $k$ and taking all powers of $t$
$$
\{f \# t^m | f \in W_k, m \in \Z\}.
$$
Now, write an element $f \in W_k$ as $f(x,y,z)$. Then the elements $f(x',y',z')$ are also linearly independent: let $a$ reps. $b,c$ be the number of times $x$, resp. $y,z$ is in $f(x,y,z)$. Then $f(x',y',z')$ is equal to $\lambda^{\mu} f(x,y,z) \# t^{b-c}$ for some $\mu \in \Z$.
\par The set $\{f(x',y',z') | f \in W_k \}$ forms a basis of $Q_k$: let $g(x,y,z)\#t^{r}$ be an element of $Q_k$ with $g(x,y,z)$ a monomial of degree $k$, $g\neq 0$. Then necessarily $r=b-c$ where $b$ is the number of times $y$ occurs in $g$ and $c$ is the number of times $z$ occurs in $g$. $g$ can be written as a unique linear combination of elements in $W_k$, say using the terms $f_1,\ldots,f_i$. However, due to the fact that in the defining relations of $C$ the number of times a variable occurs does not change (unless the monomial becomes $0$), this linear combination has the following property: for each $f_i$, the number of occurrences of $x,y,z$ stays the same. But then $g(x,y,z)\#t^{b-c}$ can be written as a linear combination of $f_i(x,y,z) \# t^{b-c} = \lambda^{-\mu}f(x',y',z')$. So the Hilbert series indeed stays the same.
\par The fact that the only relations are of degree $3$ follows as the only relations between monomials can occur if all variables occur and the relations come from $[\{x,y\},z]$ and $[\{y,z\},x]$.
\end{proof}
\begin{corollary}
The algebra $T_{-\lambda}$ can be embedded in a smash product $C \# \Z$.
\end{corollary}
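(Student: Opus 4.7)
The plan is to realize the embedding concretely using the map $x \mapsto x' = x \# 1$, $y \mapsto y' = y \# \sigma$, $z \mapsto z' = z \# \sigma^{-1}$ constructed in the preceding proposition. I would first check that this assignment extends to a well-defined graded algebra homomorphism $\phi : T_{-\lambda} \to C \# \Z$. The quadratic defining relations $x^2 = y^2 = z^2 = 0$ of $S_{[0:0:1]}$ are immediate, since for example $(y')^2 = y \cdot \sigma(y) \# \sigma^2 = y^2 \# \sigma^2 = 0$, and analogously for $x'$ and $z'$. For the degree $3$ relations $(v_1)_{-\lambda}$ and $(v_2)_{-\lambda}$, I would substitute the six monomial values $z'x'y' = \lambda(zxy \# 1)$, $x'y'z' = \lambda(xyz \# 1)$, $y'z'x' = \lambda(yzx \# 1)$, $y'x'z' = yxz \# 1$, $z'y'x' = zyx \# 1$, $x'z'y' = xzy \# 1$ already computed in the proposition; each relation then collapses to a scalar multiple of $(zxy - yxz) + \omega^{\pm 1}(xyz - zyx) + \omega^{\mp 1}(yzx - xzy)$, and the Clifford commutator relations $[\{x_i,x_{i+1}\},x_{i+2}] = 0$ force the three differences to coincide in $C$, so the sum vanishes by $1 + \omega + \omega^2 = 0$.

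Once well-definedness is in place, the image of $\phi$ is exactly the subalgebra $Q$ of the preceding proposition. Injectivity is the remaining content of that proposition: the assertion that no relations hold among $x', y', z'$ beyond those coming from degree $3$ relations of $T_{-\lambda}$ says precisely that $\ker \phi = 0$, so $\phi$ factors as an isomorphism of $T_{-\lambda}$ with $Q$, and the inclusion $Q \hookrightarrow C \# \Z$ is the desired embedding. The main obstacle has thus already been dispatched inside the proposition itself (the Hilbert-series count of $Q$ via the free $C$-module structure of $C \# \Z$, together with the observation that no extra relations are forced beyond degree $3$); the corollary is essentially a repackaging. As a by-product, comparing the Hilbert series of $T_{-\lambda}$ and of $Q$ yields the proof of Theorem \ref{th:Hilbert} promised earlier in this section, for every $t = -\lambda \in \C^* = \PP^1 \setminus \{0,\infty\}$.
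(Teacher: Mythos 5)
Your proof is correct and takes essentially the same route as the paper: the corollary is stated there without proof precisely because it is the immediate consequence of the preceding proposition, namely that the surjection $T_{-\lambda}\twoheadrightarrow Q$ given by $x\mapsto x'$, $y\mapsto y'$, $z\mapsto z'$ has trivial kernel because every relation holding in $Q$ already holds in $T_{-\lambda}$. Your explicit verification of the quadratic and cubic relations (using the Clifford identities $[\{x,y\},z]=[\{y,z\},x]=0$ and $1+\omega+\omega^2=0$), and the remark that this yields Theorem \ref{th:Hilbert}, merely spell out what the paper leaves implicit.
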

From this, it follows that Theorem \ref{th:Hilbert} is proved.
\par When $\lambda$ is a primitive root of unity of order $n$, we can take the same action of $\Z_n = \langle \sigma \rangle$ on $C$, take the smash product $C \# \Z_n$ and take the same elements $x',y',z'$ as generators of a subalgebra of $C \# \Z_n$. These 3 elements then fulfil the relations of $T_{-\lambda}$ and again we have found a subalgebra of $P$ isomorphic to $T_{-\lambda}$.
\par In the case that $\lambda$ is a primitive $n$th root of unity, we can lift the $3$ linearly independent central elements of degree $n$ of $T_{-\lambda}/(g_{-\lambda})$.
\begin{proposition}
The element $(x'z'+z'x')^n$ belongs to the center of $C \# \Z_n$.
\end{proposition}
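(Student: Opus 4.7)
The plan is to compute $(x'z'+z'x')^n$ explicitly in $C \# \Z_n$ and recognise the result as the image of a central Clifford-algebra element under the embedding $c \mapsto c \# 1$. Using the smash product rule $(a \# g)(b \# h) = a\, g(b) \# gh$ and $\sigma^{-1}(x) = \lambda x$, the first computation gives $x'z' + z'x' = A \# \sigma^{-1}$, where $A := xz + \lambda zx \in C$. A direct check using $\sigma(x) = \lambda^{-1}x$, $\sigma(z) = \lambda z$ then shows $\sigma(A) = A$, which, together with $\sigma^n = 1$ in $\Z_n$, collapses the iterated power
\[
(A \# \sigma^{-1})^n = A\cdot\sigma^{-1}(A)\cdots\sigma^{-(n-1)}(A) \# \sigma^{-n}
\]
to $A^n \# 1$.

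The main step is to identify $A^n$ inside $C$. Because $x^2 = z^2 = 0$, the cross products $xz \cdot zx$ and $zx \cdot xz$ both vanish, so a short induction on $k$ gives $A^k = (xz)^k + \lambda^k (zx)^k$; taking $k = n$ and using $\lambda^n = 1$ yields $A^n = (xz)^n + (zx)^n$. Exactly the same vanishing applied to $(xz+zx)^n$ shows $\{x,z\}^n = (xz)^n + (zx)^n$, so $A^n = \{x,z\}^n$. Since $\{x,z\}$ equals (up to the factor $2$) one of the central parameters $u_1$ of the Clifford algebra $C$, the element $A^n$ lies in $Z(C)$.

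To finish, I would observe that an element of the form $c \# 1$ is central in $C \# \Z_n$ iff $c \in Z(C)$ and $\sigma(c) = c$: expanding $(c \# 1)(b \# g) - (b \# g)(c \# 1) = (cb - b\,g(c)) \# g$ and varying $b$ and $g$ gives both conditions. For $c = A^n$ both requirements have been verified, the $\sigma$-invariance following from the $\sigma$-invariance of $A$ itself. The only potentially tricky point is the collapse of cross terms in $A^k$ and in $(xz+zx)^n$, but this is immediate from the Clifford relations $x^2 = z^2 = 0$, so I do not anticipate any real obstacle.
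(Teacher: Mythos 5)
Your proposal is correct and follows essentially the same route as the paper: compute $x'z'+z'x' = (xz+\lambda zx)\#\sigma^{-1}$, use the $\sigma$-invariance of $xz$ and $zx$ together with the vanishing of cross terms (from $x^2=z^2=0$) to collapse the $n$th power to $(xz+zx)^n\#1$, and conclude by centrality of $\{x,z\}$ in $C$. You simply spell out the final centrality check for elements of the form $c\#1$, which the paper leaves implicit.
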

\begin{proof}
We have
$$
x'z'+z'x' = (x \# 1)(z \# \sigma^{-1})+(z \# \sigma^{-1})(x \# 1) = (xz+\lambda zx)\# \sigma^{-1}.
$$
$xz$ and $zx$ are fixed by $\sigma$, so we get 
$$
((xz+\lambda zx)\# \sigma^{-1})^n=(xz)^n+\lambda^n(zx)^n \# 1= (xz+zx)^n \# 1
$$
which is indeed central.
\end{proof}
This implies that $(xz+zx)^n$ belongs to the center of $T_{-\lambda}$. Then we can use the Heisenberg action in $T_{-\lambda}$ to see that $(yz+zy)^n$ and $(xy+yx)^n$ are also central in $T_{-\lambda}$. From \cite[Lemma 3.6]{smith1994center}, we now deduce
\begin{theorem}
The center of $T_{-\lambda}$ with $\lambda$ a primitive $n$th root of unity is generated by 1 element of degree 3 $g_{-\lambda}$ and 3 linear independent elements of degree $2n$, say $u,v,w$, with one relation of the form $uvw = \alpha g^{2n}$ for some $\alpha \in \C^*$. $T_{-\lambda}$ is also a finite module over its center.
\end{theorem}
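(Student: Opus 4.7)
The strategy is to pull back the structure of $Z(M_{-\lambda})$, computed in the previous subsection, to $Z:=Z(T_{-\lambda})$ along the central non-zerodivisor $g:=g_{-\lambda}$ of degree $3$, and then rigidify the form of the single relation via \cite[Lemma 3.6]{smith1994center}. Set $u=(xy+yx)^{n}$, $v=(yz+zy)^{n}$, $w=(zx+xz)^{n}\in T_{-\lambda}$; the preceding proposition and its two Heisenberg-rotated analogues imply that $u,v,w\in Z$, and we let $Z'\subseteq Z$ denote the subalgebra generated by $g,u,v,w$.

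Because $g$ is central and a non-zerodivisor, $gT_{-\lambda}\cap Z=gZ$: if $gy\in Z$ then $g[a,y]=0$ for every $a$, hence $[a,y]=0$. Consequently the induced map $Z/gZ\hookrightarrow M_{-\lambda}$ is injective and lands in $Z(M_{-\lambda})$; surjectivity onto $Z(M_{-\lambda})=\C[\bar u,\bar v,\bar w]/(\bar u\bar v\bar w)$ is immediate because every polynomial in $\bar u,\bar v,\bar w$ lifts to an element of $Z'\subseteq Z$. The same lifting principle proves $Z=Z'$: for $c\in Z$, write $\bar c=P(\bar u,\bar v,\bar w)$, and note that $c-P(u,v,w)=gc'$ with $c'$ forced into $Z$ by the non-zerodivisor argument just given, allowing a degree induction. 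Combining these observations with $g$ being a non-zerodivisor yields
\[
H_{Z}(t)=\frac{H_{Z(M_{-\lambda})}(t)}{1-t^{3}}=\frac{1-t^{6n}}{(1-t^{3})(1-t^{2n})^{3}}.
\]

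Consider the graded surjection $\psi\colon\C[G,U,V,W]\twoheadrightarrow Z$ with $\deg G=3$ and $\deg U=\deg V=\deg W=2n$. The Hilbert series computation forces $\ker\psi$ to have Hilbert series $t^{6n}/((1-t^{3})(1-t^{2n})^{3})$, so it is generated (up to scalar) by a single element of degree $6n$. Reducing modulo $G$, this generator must be a scalar multiple of $\bar U\bar V\bar W$, so it has the shape $UVW+GP$ with $P$ an $H_{3}$-invariant of degree $6n-3$. The role of \cite[Lemma 3.6]{smith1994center} is to force $P=-\alpha G^{2n-1}$, dealing in particular with the extra $H_{3}$-invariants that appear when $3\mid n$, and to deliver the relation $uvw=\alpha g^{2n}$. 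That $\alpha\neq 0$ is established by a direct calculation in the smash product $C\#\Z_{n}$ via the embedding of the previous subsection: using $x^{2}=y^{2}=z^{2}=0$ in $C$, the image of $uvw$ is seen to be a nonzero scalar multiple of the image of $g^{2n}$.

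For the finite-module statement, let $\bar m_{1},\ldots,\bar m_{k}$ be homogeneous $Z(M_{-\lambda})$-module generators of $M_{-\lambda}$, which exist by the preceding theorem, and lift them to $\tilde m_{i}\in T_{-\lambda}$. Put $N=\sum_{i}Z\cdot\tilde m_{i}$. Then $N+gT_{-\lambda}=T_{-\lambda}$, and graded Nakayama (valid because $T_{-\lambda}$ is connected graded and $g$ lies in the augmentation ideal) upgrades this to $N=T_{-\lambda}$, so $T_{-\lambda}$ is finite over $Z$. The principal obstacle is locking the single degree-$6n$ relation into the clean form $uvw=\alpha g^{2n}$ rather than a more elaborate $H_{3}$-invariant combination; that is precisely the input of the Smith--Tate lemma, while the other steps are either formal consequences of $g$ being a central non-zerodivisor or applications of graded Nakayama.
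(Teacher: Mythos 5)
Your proposal is correct and follows the same strategy as the paper: reduce modulo the regular central element $g_{-\lambda}$, use the known structure of $Z(M_{-\lambda})$, lift, and verify $\alpha\neq 0$ inside $C\#\Z_n$. The difference is one of granularity: the paper's proof consists almost entirely of checking the hypotheses of \cite[Lemma 3.6]{smith1994center} and then quoting its conclusion, whereas you re-derive most of that lemma's content by hand (the identification $Z/gZ\cong Z(M_{-\lambda})$ via the non-zerodivisor argument, the Hilbert series of $Z$, principality of the relation ideal, and graded Nakayama for the finite-module claim). That unpacking is sound and arguably more self-contained. The one genuinely delicate point is the same in both treatments: knowing that the degree-$6n$ generator of the relation ideal is exactly $UVW-\alpha G^{2n}$ rather than $UVW+G\cdot(\text{mixed terms})$. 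When $3\nmid n$ this is forced by degree reasons alone, but when $3\mid n$ monomials such as $G^{(4n-3)/3}U$ also have degree $6n-3$, so something extra is needed; you (like the paper) defer this to the cited lemma. If you want to avoid that dependence, note that $g,u,v,w$ are simultaneous eigenvectors for the $(\C^*)^3$-action used later in the paper, so the (one-dimensional) degree-$6n$ part of the relation ideal is spanned by a weight vector; since it contains $UVW$ with nonzero coefficient, its weight is $(\lambda_1\lambda_2\lambda_3)^{2n}$, and the only degree-$6n$ monomials of that weight are $UVW$ and $G^{2n}$. Your proposal also implicitly uses that $\bar u\bar v\bar w=0$ is the \emph{only} relation in $Z(M_{-\lambda})$ (needed for its Hilbert series), which the paper likewise takes as established in the preceding subsection.
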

\begin{proof}
All the conditions of the mentioned lemma are satisfied: $g_{-\lambda}$ is regular and the image of $\C[u,v,w]$ generates the center of $M_{-\lambda}$. Moreover, as we have in $M_{-\lambda}$ that the relation in the center is $\overline{u}\overline{v}\overline{w} = 0$, it follows that the only relation is of the form $uvw = \alpha g^{2n}$. $\alpha$ is not 0 as $(x'z'+z'x')^n(x'y'+y'x')^n(y'z'+z'y')^n$ is not 0 in $C \# \Z_n$.
\end{proof}
\subsection{Representation theory}
Fix $\lambda$ a primitive root of unity of order $n$, $n \neq 1$. Let $P=P_\lambda$ be  the smash product of $C$ with $\Z_n$ defined in last section and let $Q = Q_\lambda = T_{-\lambda}$ be the subalgebra of $P$ coming from a quotient of $S$.
\begin{theorem}
The center of $P$ is isomorphic to the ring $\C[a,b,c,d,e]/(ae - d^2,bc-e^n)$.
\end{theorem}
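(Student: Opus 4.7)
The plan is to identify $Z(P)$ with the $\sigma$-invariant subring of $Z(C)$, embedded diagonally as $Z(C)^{\sigma} \# 1$, and then to compute those invariants by standard toric means. The key structural input is the center of the Clifford algebra $C$ itself: as $C$ is the Clifford algebra over $\C[u_0, u_1, u_2]$ of the rank-$3$ symmetric bilinear form with matrix $B$ and discriminant $\det(B) = 2 u_0 u_1 u_2$, its center is the free $\C[u_0, u_1, u_2]$-module of rank $2$ with basis $\{1, h\}$, where $h$ is the pseudoscalar
\[
h \;=\; (zxy + xyz + yzx) \;-\; (yxz + zyx + xzy),
\]
i.e.\ (up to a nonzero scalar) the element $g_{-1}$ of the Clifford case. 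Centrality of $h$ is verified by a direct commutator check using $x^2 = y^2 = z^2 = 0$ and the Clifford relations. The square $h^2$ lies in $\C[u_0, u_1, u_2]$ in the degree-$6$ part of $C$ and has trimultidegree $(2,2,2)$ in $(x,y,z)$; since $u_0 u_1 u_2$ is the unique monomial of that trimultidegree (using the weights $u_0 \leftrightarrow (0,1,1)$, $u_1 \leftrightarrow (1,0,1)$, $u_2 \leftrightarrow (1,1,0)$), we have $h^2 = \gamma\, u_0 u_1 u_2$ for some $\gamma \in \C^*$; nonvanishing of $\gamma$ follows by comparing with the nonvanishing product $(x'z'+z'x')^n(x'y'+y'x')^n(y'z'+z'y')^n$ computed in the previous theorem, which equals $2^{3(n-1)}(u_0 u_1 u_2)^n$. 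After rescaling $h$ we may assume $\gamma = 1$.

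Next I would reduce $Z(P)$ to invariants. The $\sigma$-action on the central polynomial subring is $\sigma(u_0) = \lambda u_0$, $\sigma(u_1) = u_1$, $\sigma(u_2) = \lambda^{-1} u_2$, and $\sigma(h) = h$ (every monomial of $h$ contains exactly one of each variable, and the combined $\sigma$-weight cancels). Now write an arbitrary element of $P$ as $\sum_{i=0}^{n-1} c_i \# \sigma^i$ and impose centrality. Commutation with $1 \# \sigma$ decouples the sum by $\sigma$-degree and forces each $c_i$ to be $\sigma$-invariant. Commutation with $u_0 \# 1$, using centrality of $u_0$ in $C$, gives $(\lambda^i - 1)\, u_0 c_i = 0$; since $u_0$ is a non-zero-divisor of $C$ (because $C$ is a free $\C[u_0,u_1,u_2]$-module) and $\lambda^i \neq 1$ for $0 < i < n$, this kills every $c_i$ with $i \neq 0$. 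Commutation of $c_0 \# 1$ with arbitrary $y \# 1$ then forces $c_0 \in Z(C)$, which combined with $\sigma$-invariance gives $c_0 \in Z(C)^\sigma$. Hence $Z(P) = Z(C)^\sigma \# 1$.

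Finally I would compute $Z(C)^\sigma$ explicitly. On the polynomial part, the $\Z_n$-invariants of $\C[u_0, u_1, u_2]$ under the weighting $(1, 0, -1)$ are classically generated by $a := u_1$, $b := u_0^n$, $c := u_2^n$ and $e := u_0 u_2$, with the single toric relation $bc = e^n$. Adjoining the invariant basis element $d := h$ of $Z(C)$ produces one more generator of degree $3$ satisfying $d^2 = u_0 u_1 u_2 = u_1(u_0 u_2) = ae$; no further relations appear because $\{1, h\}$ is a free $\C[u_0,u_1,u_2]$-basis of $Z(C)$ and both basis elements are invariant, so $Z(C)^\sigma = \C[u_0,u_1,u_2]^\sigma \oplus \C[u_0,u_1,u_2]^\sigma \cdot d$. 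This yields the claimed presentation $\C[a,b,c,d,e]/(ae - d^2,\; bc - e^n)$. The main obstacle is the first step: establishing rigorously the rank-$2$ free structure of $Z(C)$ over its central polynomial subring and pinning down the exact form of $h^2$; once this Clifford-theoretic input is granted, the smash-product reduction and the invariant-theoretic calculation are essentially formal.
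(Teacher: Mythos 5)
Your proposal is correct and follows essentially the same route as the paper: both identify $Z(C)$ as the free rank-$2$ module $\C[u_0,u_1,u_2]\oplus\C[u_0,u_1,u_2]\,h$ over the even central polynomial ring, with $h^2$ a nonzero multiple of $u_0u_1u_2$, then take $\sigma$-invariants and read off the two relations $bc=e^n$ and $ae=d^2$. The only real difference is that you explicitly justify the identification $Z(C\#\Z_n)=Z(C)^{\sigma}\#1$ (via commutation with $1\#\sigma$ and with the non-zero-divisor $u_0$), a step the paper's proof leaves implicit.
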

\begin{proof}
The center of $C$ is generated by $(x+y)^2,(y+z)^2,(x+z)^2,g=zxy-yxz$ with one relation of the form $(x+y)^2(y+z)^2(x+z)^2 = \alpha g^2$ for some $\alpha \in \C^*$. The elements in $Z(C)$ fixed by $\sigma$ are $xz+zx$, $(yz+zy)^n$, $(yx+xy)^n$, $g$ and $(yz+zy)(yx+xy)$. But then the relations become
\begin{gather*}
(yz+zy)((xz+zx)(yx+xy)) = \alpha g^2,\\
(xz+zx)^n(yx+xy)^n = ((xz+zx)(yx+xy))^n.
\end{gather*}
Up to a scalar, these are the relations of the claimed ring.
\end{proof}
\begin{corollary}
The PI-degree of $P$ is $2n$.
\end{corollary}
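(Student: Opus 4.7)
The plan is to pin the PI-degree of $P = C \# \Z_n$ between two matching bounds of $2n$.

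For the lower bound, I would exploit that $T_{-\lambda}$ was realised in the previous subsection as a graded subalgebra of $P$ via $x \mapsto x \# 1$, $y \mapsto y \# \sigma$, $z \mapsto z \# \sigma^{-1}$. Since the standard identity $s_{2d}$ descends to any subalgebra, the PI-degree is monotone under inclusions, and combining with the already established value $\text{PI-deg}(T_{-\lambda}) = 2n$ yields $\text{PI-deg}(P) \geq 2n$.

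For the upper bound I would carry out a generic rank computation over the center. The algebra $P$ is free of rank $n$ over $C$ with basis $\{1,\sigma,\ldots,\sigma^{n-1}\}$, and by the earlier Azumaya-ness theorem, $C$ is Azumaya of rank $4$ over $Z(C)$ on the dense open locus where the quadratic form is nondegenerate. The automorphism $\sigma$ preserves $Z(C)$ and its action should be faithful of order $n$: for instance $\sigma \cdot (x+z)^2 = (\lambda^{-1}x + \lambda z)^2$, and analogous eigenvalue checks on the other generators of $Z(C)$ will rule out any intermediate power of $\sigma$ acting trivially. Combined with the identification $Z(P) = Z(C)^{\langle \sigma \rangle}$ implicit in the just-proved theorem on $Z(P)$, Galois theory then forces the generic degree of $Z(C)/Z(P)$ to equal $n$, so that
\[
[P : Z(P)]_{\mathrm{gen}} = n \cdot 4 \cdot n = (2n)^2,
\]
and Posner's theorem gives $\text{PI-deg}(P) \leq 2n$.

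The one step that will genuinely need care is the verification that $\sigma$ acts faithfully at the generic point of $\wis{Spec}(Z(C))$, since otherwise the generic rank of the skew group ring could collapse and the above arithmetic would not go through. Once that faithfulness is in place, one may alternatively invoke the skew group algebra identity $M_2(F)\#\Z_n \cong M_{2n}(F^{\langle \sigma \rangle})$ for outer $\Z_n$-actions on $F = \mathrm{Frac}(Z(C))$ to obtain the upper bound in one line, matching the lower bound and completing the proof.
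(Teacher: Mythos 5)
Your upper bound is, in substance, exactly the paper's proof: the paper simply multiplies three ranks --- $P$ free of rank $n$ over $C$, $C$ of rank $4$ over $Z(C)$, and $Z(C)$ of rank $n$ over $Z(C)^{\Z_n}$ --- to get generic rank $(2n)^2$ over the center, which for a prime PI ring already pins the PI-degree at exactly $2n$ by Posner, so your separate lower bound via the subalgebra $T_{-\lambda}$ is redundant. Two cautions. First, your sample faithfulness check is the one generator that witnesses nothing: since $x^2=z^2=0$ in $C$ one has $(\lambda^{-1}x+\lambda z)^2=xz+zx=(x+z)^2$, so $(x+z)^2$ is $\sigma$-fixed (as is $g$); you should instead use $(x+y)^2\mapsto \lambda^{-1}(x+y)^2$ or $(y+z)^2\mapsto\lambda(y+z)^2$, whose eigenvalues are primitive $n$th roots of unity and do force $\sigma$ to have order exactly $n$ on $Z(C)$ --- this is precisely what makes $Z(C)$ of rank $n$ over $Z(C)^{\Z_n}$ and rules out partial innerness. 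Second, the equality $\mathrm{PI}\textrm{-}\deg(T_{-\lambda})=2n$ is \emph{not} available at this point: in the paper it is proved in the proposition immediately following this corollary, and that proof uses the present corollary, so quoting it would be circular. For your lower bound you may only invoke $\mathrm{PI}\textrm{-}\deg(T_{-\lambda})\ge 2n$, which does follow from the earlier theorem that $M_{-\lambda}=T_{-\lambda}/(g_{-\lambda})$ has PI-degree $2n$ together with monotonicity under quotients and subalgebras; with that correction (or by dropping the lower bound entirely) your argument is sound.
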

\begin{proof}
$P$ is a free module of rank $n$ over $C$. $C$ is a module of rank $4$ over its center $Z(C)$, which in turn is a module of rank $n$ over its ring of invariants $Z(C)^{\Z_n}$. The claim follows.	
\end{proof}
\begin{proposition}
The PI-degree of $Q$ is $2n$.
\end{proposition}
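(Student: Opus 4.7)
The plan is to sandwich $Q$ between two algebras whose PI-degree we already understand: the smash product $P = C\# \Z_n$ from above and the quotient $M_{-\lambda} = Q/(g_{-\lambda})$ studied in the previous subsection.

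First, the upper bound. Since the subalgebra $Q \subset P$ generated by $x',y',z'$ has the same defining relations as $T_{-\lambda}$, the inclusion $Q \hookrightarrow P$ is an algebra embedding. A subalgebra of a PI-algebra of PI-degree $d$ has PI-degree at most $d$, because any polynomial identity of $M_d(\C)$ that vanishes on $P$ also vanishes on $Q$. By the preceding corollary, $P$ has PI-degree $2n$, so $\mathrm{PIdeg}(Q) \leq 2n$.

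Next, the lower bound. The natural surjection $Q \twoheadrightarrow Q/(g_{-\lambda}) = M_{-\lambda}$ forces $\mathrm{PIdeg}(Q) \geq \mathrm{PIdeg}(M_{-\lambda})$, since every polynomial identity satisfied by $Q$ is satisfied by the quotient. By the theorem proved earlier on the PI-degree of $M_t$ at a primitive $n$th root of unity $-t = \lambda$, we have $\mathrm{PIdeg}(M_{-\lambda}) = 2n$. Combining the two inequalities gives $\mathrm{PIdeg}(Q) = 2n$.

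I do not expect a real obstacle here: the only subtleties are to verify that $Q \hookrightarrow P$ is actually injective (which follows from the Hilbert-series computation of the preceding proposition, where a monomial basis of $Q$ was explicitly identified inside $P$) and that the quotient map $Q \to M_{-\lambda}$ really is surjective with the correct kernel (which is the definition of $M_{-\lambda}$). Alternatively, one could exhibit a $2n$-dimensional family of $2n$-dimensional simple representations of $Q$ directly from the point-module analysis of $T_t$, since $\phi^{2n}=\mathrm{Id}$ on $\mathbb{W}$ and each point module of $T_t$ is annihilated by $g_t$, so lifts to $Q$; but the sandwich argument above is both shorter and uses only material already established.
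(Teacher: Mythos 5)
Your proof is correct, and your lower bound is exactly the paper's: both deduce $\mathrm{PIdeg}(Q)\geq 2n$ from the surjection $Q\twoheadrightarrow M_{-\lambda}=Q/(g_{-\lambda})$ together with the earlier theorem that $M_t$ has PI-degree $2n$. The difference lies in the upper bound. The paper does not invoke monotonicity of polynomial identities under subalgebras; it argues geometrically that, since $Z(P)$ is a finite $Z(Q)$-module, the induced map $\wis{Spec}(Z(P))\to\wis{Spec}(Z(Q))$ is surjective, so the simple $2n$-dimensional representations of $P$ restrict to give simple $2n$-dimensional representations of $Q$ over an open subset of $\wis{Spec}(Z(Q))$, forcing the generic simple dimension --- hence the PI-degree --- to be $2n$. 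Your sandwich argument, namely that $Q\subseteq P$ satisfies every identity of $P$ and therefore $\mathrm{PIdeg}(Q)\leq\mathrm{PIdeg}(P)=2n$ by the preceding corollary, is more elementary and sidesteps the slightly delicate point of why a simple $P$-module remains simple upon restriction to $Q$ generically; it only requires the injectivity of $Q\hookrightarrow P$, which, as you correctly note, is guaranteed by the monomial basis of $Q$ exhibited inside $C\#\Z$ in the preceding proposition. What the paper's version buys in exchange is an explicit dense open family of $2n$-dimensional simples over $\wis{Spec}(Z(Q))$, in the spirit of the Azumaya-locus proposition that follows; for the bare statement at hand your two inequalities suffice. (One trivial slip in your closing aside: the point modules give a $2$-dimensional, not a $2n$-dimensional, family of $2n$-dimensional simple representations.)
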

\begin{proof}
We know that the PI-degree is at least $2n$, as we have found simple $2n$-dimensional representations coming from the quotient of $Q$ by the degree 3 central element $g_{-\lambda}$. As $Z(P)$ is a finite $Z(Q)$-module of rank $n$, every simple $2n$-dimensional representation of $P$ induces a representation of $Q$ and the induced map $\xymatrix{\wis{Spec}(Z(P)) \ar[r] & \wis{Spec}(Z(Q))}$ is surjective. But then there is an open subset of $\wis{Spec}(Z(Q))$ containing simple $2n$-dimensional representations. This proves the claim.
\end{proof}
In fact, by using the $(\C^*)^3$-action as gradation preserving algebra automorphisms, we find
\begin{proposition}
The Azumaya locus of $\wis{Spec}(Z(Q))$ is equal to $\wis{Spec}(Z(Q))$ minus 3 lines, the 2 by 2 intersections of the 3 planes $\mathbf{V}(u),\mathbf{V}(v),\mathbf{V}(w)$.
\end{proposition}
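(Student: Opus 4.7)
The plan is to exploit the natural $(\C^*)^3$-action on $Q$ given by $(\alpha,\beta,\gamma)\cdot x=\alpha x$, $(\alpha,\beta,\gamma)\cdot y=\beta y$, $(\alpha,\beta,\gamma)\cdot z=\gamma z$, which preserves the relations and the grading of $Q$. On the four central generators it acts by characters: $u\mapsto(\alpha\beta)^n u$, $v\mapsto(\alpha\gamma)^n v$, $w\mapsto(\beta\gamma)^n w$, and $g_{-\lambda}\mapsto\alpha\beta\gamma\cdot g_{-\lambda}$. Since the Azumaya locus is Zariski-open and stable under algebra automorphisms, it is an open $(\C^*)^3$-stable subset of $\wis{Spec}(Z(Q))\cong\wis{Spec}(\C[u,v,w,g]/(uvw-\alpha g^{2n}))$, and is therefore a union of $(\C^*)^3$-orbits.

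A direct orbit analysis yields exactly: one open orbit where $uvw\neq 0$; three $2$-dimensional orbits of the form $\{u=0,\,vw\neq 0\}$ together with its two Heisenberg-conjugates (on each of which the relation $uvw=\alpha g^{2n}$ forces $g_{-\lambda}=0$); three $1$-dimensional orbits where exactly two of $u,v,w$ vanish; and the origin. The $1$-dimensional orbits together with the origin recover the three lines of the statement, so it suffices to show that the three lines lie outside the Azumaya locus and that the remaining orbits lie inside.

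For the first direction, by Heisenberg symmetry it suffices to treat the line $\mathbf{V}(v,w)$. The central-compatible surjection $Q\twoheadrightarrow Q/(z)\cong\C\langle x,y\rangle/(x^2,y^2)$ sends $v,w,g_{-\lambda}$ to $0$ while sending $u$ to a nonzero scalar multiple of $(xy+yx)^n$. The target has a $\C^*$-family of $2$-dimensional simple modules indexed by the eigenvalue of $xy+yx$; pulling them back to $Q$ produces $2$-dimensional simples whose central characters sweep out $\mathbf{V}(v,w)\setminus\{0\}$. Since $n\geq 2$ and the PI-degree of $Q$ is $2n$, the strict inequality $2<2n$ excludes these points from the Azumaya locus, and the origin then lies in the closure of a non-Azumaya orbit.

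For the second direction, openness and $(\C^*)^3$-stability reduce matters to exhibiting one $2n$-dimensional simple representation on each remaining orbit. On the open orbit, a simple $2n$-dimensional $C\#\Z_n$-module restricts to $Q$ to give a $2n$-dimensional simple $Q$-module on which $g_{-\lambda}$ acts as a nonzero scalar, producing a central character in the open orbit. On the three $2$-dimensional orbits the non-intersection-type point modules of $T_t$ yield a two-parameter family of $2n$-dimensional simples on which $g_{-\lambda}$ acts trivially, so their central characters land in $\mathbf{V}(g_{-\lambda})=\mathbf{V}(u)\cup\mathbf{V}(v)\cup\mathbf{V}(w)$. The main obstacle is verifying here that the central character map is dominant on each of the three $2$-dimensional orbits rather than collapsing into the three lines; by Heisenberg-equivariance this reduces to a single explicit computation of the eigenvalues of $u,v,w$ on a generic point-module representation, showing that at least two of them are nonzero.
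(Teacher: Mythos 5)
Your proposal is correct and follows essentially the same route as the paper: decompose $\wis{Spec}(Z(Q))$ into the eight $(\C^*)^3$-orbits, use openness of the Azumaya locus to capture the dense orbit, use the point modules of $T_t$ (equivalently the $2n$-dimensional simples of $M_t$) for the three $2$-dimensional orbits, and use the $2$-dimensional simples of $Q/(z)\cong\C\langle x,y\rangle/(x^2,y^2)$ to exclude the three lines. The one step you flag as the remaining obstacle --- that the central characters of the point-module simples actually land in the $2$-dimensional orbits rather than in the lines --- is exactly the step the paper asserts without computation, so your reduction of it to a single Heisenberg-equivariant eigenvalue check is, if anything, slightly more explicit than the original.
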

\begin{proof}
The $(\C^*)^3$-action divides $\wis{Spec}(Z(Q))$ into 8 orbits: 1 of dimension 3 (the largest), 3 of dimension 2, 3 of dimension 1 and 1 of dimension 0. The first orbit is open and therefore intersects $\wis{Azu}_{2n}(Q)$, but then this orbit is contained in $\wis{Azu}_{2n}(Q)$. The 3 orbits of dimension 2 are the ones coming from simple representations of $M_t$ of dimension $2n$, so these also belong to $\wis{Azu}_{2n}(Q)$. The orbits corresponding to lines can not belong to $\wis{Azu}_{2n}(Q)$, as there are simple 2-dimensional representations coming from the quotient $Q/(z)=\C\langle x,y \rangle/(x^2,y^2)$.
\end{proof}
\section{The algebra $S_{[1:0:0]}$}
In this case we can be brief: $S_{[1:0:0]}$ is a Zhang-twist of $S_{[0:0:1]}$ by the automorphism $x \mapsto z \mapsto y \mapsto x$. Recall that if $\mathcal{A}$ is a graded algebra with a graded automorphism $\phi$, then the Zhang twist is defined as the algebra with the same generators but with multiplication rule $a * b = a \phi^i(b)$ if $b \in \mathcal{A}_i$. A Zhang twist of a graded algebra preserves the Hilbert series.
\begin{proposition}
$S_{[1:0:0]}$ has a 1-dimensional family of quotients parametrized by $\C^*$ such that these quotients have the right Hilbert series. The relations of degree 3 are defined by
\begin{align*}
(v_1)_t = (zyx+\omega   xzy+ \omega^2 yxz) + t (y^3+ \omega   z^3+\omega^2 x^3),\\
(v_2)_t = (zyx+\omega^2 xzy+ \omega   yxz) + t (y^3+ \omega^2 z^3+\omega   x^3).
\end{align*}
The central element becomes $g_t=(zyx+xzy+ yxz) + t (y^3+ z^3+ x^3)$
\end{proposition}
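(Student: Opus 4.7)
The plan is to deduce the entire statement from the Zhang-twist identification $(S_{[0:0:1]})^\phi = S_{[1:0:0]}$ together with Theorem \ref{th:Hilbert}. Let $\phi$ be the graded automorphism of $S_{[0:0:1]}$ sending $x \mapsto z \mapsto y \mapsto x$; then $\phi^3 = \mathrm{id}$, and for the twisted product $a*b = a\,\phi^{|a|}(b)$ associativity yields the useful identity
$$
a_1 * a_2 * a_3 \;=\; a_1\,\phi(a_2)\,\phi^2(a_3) \qquad (a_i \in V).
$$
A one-line check gives $x*y = x^2$, $y*z = y^2$, $z*x = z^2$, all of which vanish in $S_{[0:0:1]}$; so the twist is presented by $xy, yz, zx$, i.e.\ by the defining relations of $S_{[1:0:0]}$.

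Next I would verify that the ideal $I_t$ of the previous subsection is $\phi$-stable. A direct monomial calculation gives $\phi((v_1)_t) = \omega\,(v_1)_t$ and $\phi((v_2)_t) = \omega^2\,(v_2)_t$, so $\phi$ descends to an automorphism of $T_t = S_{[0:0:1]}/I_t$. Since Zhang twisting commutes with quotients by $\phi$-stable ideals and preserves Hilbert series, the twist $(T_t)^\phi$ is a graded quotient of $S_{[1:0:0]}$ with the same Hilbert series as $T_t$; by Theorem \ref{th:Hilbert} this is the correct one for every $t \in \C^*$.

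To obtain the explicit generators I would translate each degree-$3$ monomial via the boxed identity. The three ``anti-cyclic'' monomials become
$$
zxy = z*y*x, \qquad xyz = x*z*y, \qquad yzx = y*x*z,
$$
while the three ``cyclic'' ones collapse to cubes,
$$
yxz = y^{*3}, \qquad zyx = z^{*3}, \qquad xzy = x^{*3}.
$$
Substituting these expressions into $(v_1)_t$, $(v_2)_t$ and $g_t$ of the previous subsection and then dropping the $*$ (which is now simply the product of $S_{[1:0:0]}$) produces exactly the three formulas stated in the proposition. Centrality of $g_t$ in the twist is immediate once one observes that $g_t$ is $\phi$-invariant (permutation of its monomials): combined with $\phi^3 = \mathrm{id}$ this gives $a * g_t = a\,\phi(g_t) = a g_t = g_t a = g_t\,\phi^3(a) = g_t * a$ in degree $1$, and the general case follows by associativity.

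No substantive obstacle arises; the argument reduces to the standard Zhang-twist bookkeeping together with finite monomial checks.
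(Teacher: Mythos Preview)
Your argument is correct and follows exactly the paper's approach: both proofs read off the degree-$3$ relations and the central element by translating monomials through the Zhang twist $a*b = a\,\phi^{|a|}(b)$, and both establish centrality of $g_t$ from $\phi$-invariance together with $\phi^3=\mathrm{id}$. Your write-up is in fact more explicit than the paper's (you record that $I_t$ is $\phi$-stable and spell out the centrality computation), but the underlying strategy is identical.
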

\begin{proof}
We find that 
\begin{align*}
x*x*x = xzy,y*y*y = yxz,z*z*z=zyx,\\
z*y*x = zxy,y*x*z = yzx,x*z*y=xyz
\end{align*}
and the other 3 monomials become 0. It is then clear that the relations from the proposition are really the Zhang twists of the relations in $S_{[0:0:1]}$.
\par If we calculate $(zyx+xzy+ yxz) + t (y^3+ z^3+ x^3)$ in $S_{[1:0:0]}$, we find
$$
zxy+xyz+yzx+t(yxz+zyx+xzy),
$$
which is central in the original quotient of $S_{[0:0:1]}$. As this element is also fixed under $\phi$, we are done.
\end{proof}
\section{The controlling variety}
In order to get algebras with the correct Hilbert series up to degree 4, we replaced the point $[0:0:1]$ with a $\PP^1$. One of course hopes that the corresponding variety parametrizing these algebras is just the blow-up of $\PP^2$ in this point.
\begin{theorem}
The variety parametrizing $H_3$-deformations up to degree 3 of the polynomial ring $\C[x,y,z]$ with the correct Hilbert series up to degree 4 is the blow-up of $\PP^2$ in 12 points.
\end{theorem}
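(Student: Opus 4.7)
The plan is to analyse the parametrizing variety fibrewise over the base $\PP^2\cong\Emb_{H_3}(V^*,(V^*)^3)$ of $H_3$-deformations up to degree $2$, and then glue the pieces into a global variety $X$ that I can identify with the blow-up of $\PP^2$ at the $12$ non-regular points.

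First, I would dispose of the regular locus. On the open subset $\PP^2\setminus D$, where $D$ is the $\wis{SL}_2(3)$-orbit of $\{[1:0:0],[0:0:1]\}$, the corresponding quadratic algebra is AS-regular, so it already has the Hilbert series and character series of $\C[x,y,z]$ in every degree. Any further $H_3$-invariant degree-$3$ relation would shrink $A_3$ and ruin the Hilbert series, so over each such point the set of $H_3$-deformations up to degree $3$ satisfying the correct Hilbert series in degree $4$ is a single point, and the parametrizing variety is isomorphic to $\PP^2\setminus D$ above this locus.

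Next I would analyse the fibre over each of the $12$ non-regular points. By the $\wis{SL}_2(3)$-action on $\PP^2$ (which preserves isomorphism classes of algebras) and the Zhang-twist identification of $S_{[1:0:0]}$ with $S_{[0:0:1]}$ from the previous section, it suffices to treat $p=[0:0:1]$. The decomposition $S_3\cong\bigoplus_{i,j\in\Z_3}\chi_{i,j}\oplus\chi_{0,0}\oplus\chi_{1,0}\oplus\chi_{2,0}$ recorded earlier shows that $S_3$ exceeds $\C[x,y,z]_3$ exactly by the extra copies of $\chi_{1,0}$ and $\chi_{2,0}$, each with total multiplicity $2$ in $S_3$. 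The $H_3$-quotients with correct character series in degree $3$ are therefore parametrized by $\Grass(1,2)\times\Grass(1,2)=\PP^1\times\PP^1$, and Lemma~\ref{lem:Hil} cuts out the diagonal $\Delta\cong\PP^1$ as the subvariety on which the Hilbert series is also correct in degree $4$. So the fibre over each of the $12$ special points is a $\PP^1$.

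The third step is to glue these into a global variety $X$ and identify it with the blow-up $\widetilde{\PP^2}$. The natural forgetful morphism $\pi\colon X\to\PP^2$ (recording only the degree-$2$ data) is bijective on the complement of $D$, has $\PP^1$-fibres above $D$, and makes $X$ into a smooth projective surface birational to $\PP^2$. To exhibit the blow-up structure I would identify the exceptional $\PP^1$ over $p=[0:0:1]$ with $\PP(T_p\PP^2)$ by tracking a $1$-parameter family $[A:B:\varepsilon]\subset\PP^2$ of AS-regular algebras degenerating to $S_p$ as $\varepsilon\to 0$: the unique central cubic $c_3$ of each member, viewed in $(V^{\otimes 3})^{H_3}$, has a well-defined limit selecting a point of $\Delta$, and varying $[A:B]$ sweeps out the exceptional $\PP^1$. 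Propagating this by the $\wis{SL}_2(3)$-action and the Zhang twist to the other $11$ points finishes the identification. The main obstacle is to verify that this matching is scheme-theoretic and not merely set-theoretic; equivalently, that the differential of $\pi$ has the expected rank along each exceptional $\PP^1$ (so each is a $(-1)$-curve), so that $X$ really is the blow-up and not some other birational modification contracting $12$ disjoint smooth rational curves.
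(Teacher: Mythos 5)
Your fibrewise picture agrees with the paper's: over the AS-regular locus the degree-3 data is forced (a single point in the fibre), and over each of the 12 degenerate points Lemma~\ref{lem:Hil} cuts a diagonal $\PP^1$ out of $\PP^1\times\PP^1$. Up to that point your proposal is consistent with what the paper establishes. But the actual content of the theorem is the last step, and there you have only named the obstacle rather than overcome it: knowing that a projective surface maps to $\PP^2$, is a bijection away from $12$ points, and has irreducible $\PP^1$-fibres over those points does \emph{not} identify it with the blow-up unless you also know the surface is smooth (or at least normal) --- and nothing in your argument establishes smoothness, irreducibility, or even the scheme structure of the parameter space, which is a priori just a determinantal locus inside a product of Grassmannians. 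Your proposed fix (tracking the limit of the central cubic $c_3$ along a one-parameter degeneration to identify the exceptional fibre with $\PP(T_p\PP^2)$) is a plausible heuristic but is not carried out, and even if completed it would only give a set-theoretic matching of the exceptional curve with tangent directions, which is exactly the issue you flagged.

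The paper closes this gap by brute force rather than by abstract surface theory: it realizes the controlling variety inside $\Emb_{H_3}(V^*,(V^*)^3)\times\Emb_{H_3}(\chi_{1,0}^2,\chi_{1,0}^3)\cong\PP^2\times\PP^2$ and writes the incidence condition as a rank-$2$ condition on an explicit $4\times 3$ matrix $M$ whose first two rows encode the $\chi_{1,0}$-part of $V\otimes R+R\otimes V$ and whose last two rows encode the chosen plane $\phi$. Passing to the Pl\"ucker coordinates $a_{01},a_{20},a_{12}$ of $\phi$, the rank condition becomes two linear equations whose difference is $a\,a_{20}=b\,a_{01}$ --- literally the standard equation of the blow-up of $\PP^2$ at $[0:0:1]$ inside $\PP^2\times\PP^2$. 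This settles the scheme structure at once, with no smoothness or $(-1)$-curve analysis needed; the remaining points are handled by the $\wis{SL}_2(3)$-action, and the $\chi_{2,0}$-copy contributes an isomorphic blow-up $Z_{2,0}$ whose fibre product with $Z_{1,0}$ over $\PP^2$, restricted to the diagonal imposed by Lemma~\ref{lem:Hil}, is again the blow-up. To repair your argument you would either need to carry out an equivalent explicit local computation near each exceptional fibre, or separately prove that your glued space $X$ is a smooth projective surface before invoking the factorization of birational morphisms of surfaces.
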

\begin{proof}
We will show that the variety $$Z\subset \Emb_{H_3}(V^*,(V^*)^3) \times \Emb_{H_3}(\chi_{1,0}^2,\chi_{1,0}^3) \cong² \PP^2 \times \PP^2$$ parametrizing nice quotients with the right multiplicity of  $\chi_{1,0}$ in degree 3 is the blow-up of $\PP^2$ in the $H_3$-orbit of $[0:0:1]$.
\par Let $R$ correspond to the relations
$$
\begin{cases}
a yz + b zy + c x^2,\\
a zx + b xz + c y^2,\\
a xy + b yx + c z^2.
\end{cases}
$$
Let $\phi$ be the subspace of $V\otimes V \otimes V$ generated by
$$
\begin{cases}
A(x^3+\omega y^3+\omega^2 z^3)+B(zxy + \omega xyz + \omega^2 yzx) + C (yxz + \omega zyx + \omega^2 xzy),\\
D(x^3+\omega y^3+\omega^2 z^3)+E(zxy + \omega xyz + \omega^2 yzx) + F (yxz + \omega zyx + \omega^2 xzy)
\end{cases}
$$
such that the matrix
$$
\begin{bmatrix}
A & B & C \\ D & E & F
\end{bmatrix}
$$
has rank 2.
Decomposing $V \otimes R + R \otimes V$ in $H_3$-representations, we find that the $(R,\phi)$ belongs to $Z$ if and only if the following matrix has rank 2
$$M=
\begin{bmatrix}
c & a \omega^2 & b \omega \\
c & a \omega & b \omega^2 \\
A & B & C \\
D & E & F
\end{bmatrix}.
$$
We may assume that $c = 1$, putting $a=1$ or $b=1$ gives similar results. Put $a_{01} = AE-BD, a_{20} = CD-AF, a_{12} = BF-CE$. $M$ has rank 2 if and only if
\begin{eqnarray*}
a_{12}+a\omega a_{20} + b \omega^2 a_{01} = 0,\\
a_{12}+a\omega^2 a_{20} + b \omega a_{01} = 0.
\end{eqnarray*}
From this it follows that $a a_{20} = b a_{01}$, which is indeed the equation for the blow-up of $\PP^2$ in $[a:b:c]=[0:0:1]$. The same result holds for the points $[1:0:0]$ and $[0:1:0]$, call $Z_{1,0}$ the blow-up of $\PP^2$ in these 3 points.
\par We could now do the same for the representation $\chi_{2,0}$ to get $Z_{2,0}$, which is also isomorphic to $\PP^2$ blown-up in 3 points. However, in order to get the right Hilbert series up to degree 4, we have seen that we need to take the `diagonal' $\Delta \subset Z_{1,0} \times_{\PP^2} Z_{2,0}$, which is of course just the blow-up of $\PP^2$  in 3 points.
\par For the other $9$ points, we can use the $\wis{SL}_2(3)$-action.
\end{proof}
\section{The bad case $t=0,\infty$}
In the blow-up of $\PP^2$ in 12 points, there are still 24 points where the Hilbert series of the corresponding algebras explodes. Still, it would be useful to know what the Hilbert series of these algebras are. As all these algebras are isomorphic to each other (or isomorphic to a Zhang twist) by courtesy of the $\wis{SL}_2(3)$-action, it is enough to calculate the Hilbert series of the algebra
$$
\mathcal{C} = \C \langle x,y,z \rangle /(x^2,y^2,z^2, zxy + \omega xyz + \omega^2 yzx,zxy + \omega^2 xyz + \omega yzx).
$$
\begin{proposition}
The element $g_0 = zxy + xyz + yzx$ fulfils the conditions
$$
g_0x = x g_0 = g_0y = y g_0=g_0z = z g_0=0.
$$
\end{proposition}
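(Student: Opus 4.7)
The plan is to exploit the fact that the two cubic relations of $\mathcal{C}$ collapse the three cyclic words $xyz$, $yzx$, $zxy$ into a single element (up to a scalar), after which the six identities become immediate from $x^2 = y^2 = z^2 = 0$.

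First I would take suitable linear combinations of the two cubic relators
$r_1 = zxy + \omega xyz + \omega^2 yzx$ and $r_2 = zxy + \omega^2 xyz + \omega yzx$.
The difference $r_1 - r_2 = (\omega-\omega^2)(xyz - yzx)$ forces $xyz = yzx$ in $\mathcal{C}$, and then the sum $r_1 + r_2 = 2zxy + (\omega+\omega^2)(xyz+yzx) = 2zxy - 2xyz$ forces $zxy = xyz$. Thus
\[
xyz = yzx = zxy \qquad \text{in } \mathcal{C},
\]
and $g_0 = xyz + yzx + zxy$ admits the three equivalent presentations $g_0 = 3\,xyz = 3\,yzx = 3\,zxy$.

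Second, to verify each of the six products I pick the presentation whose outer letter matches the factor being multiplied, so that $x^2 = 0$ (or $y^2 = 0$, or $z^2 = 0$) kills the monomial immediately. For instance,
\[
xg_0 = 3x(xyz) = 3x^2 yz = 0, \qquad g_0 x = 3(yzx)x = 3\,yz\,x^2 = 0,
\]
and the remaining four identities $yg_0 = 3y(yzx) = 0$, $g_0 y = 3(zxy)y = 0$, $zg_0 = 3z(zxy) = 0$, $g_0 z = 3(xyz)z = 0$ are handled in exactly the same way.

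There is no real obstacle here: the only thing to notice is that the two degree-$3$ relators, viewed together, are strong enough to identify the three cyclic monomials in $\mathcal{C}$, after which the quadratic relations trivialize every product with $x$, $y$, or $z$. (One could alternatively invoke the $H_3$-invariance of $g_0$ to reduce the six identities to two, but the direct computation above is already immediate.)
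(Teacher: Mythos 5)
Your proof is correct, and it takes a genuinely different (and more self-contained) route than the paper. The paper's own proof reduces the six identities to the single case of $x$ via the Heisenberg action and then delegates that case to a MAGMA computation; you instead observe that the two cubic relators $r_1 = zxy + \omega xyz + \omega^2 yzx$ and $r_2 = zxy + \omega^2 xyz + \omega yzx$ span a $2$-dimensional subspace of $\C\,zxy + \C\,xyz + \C\,yzx$ whose quotient identifies the three cyclic words, so that $xyz = yzx = zxy$ and $g_0 = 3\,xyz = 3\,yzx = 3\,zxy$ in $\mathcal{C}$. Your linear algebra is right ($r_1 - r_2$ gives $xyz = yzx$, and $r_1 + r_2$ together with $\omega + \omega^2 = -1$ gives $zxy = xyz$), and choosing the presentation of $g_0$ whose outer letter matches the multiplying variable makes each of the six products vanish by the quadratic relations. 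What your approach buys is a transparent, computer-free verification that also explains \emph{why} $g_0$ is annihilated: the degree-$3$ relations collapse the cyclic monomials to a single class which is then killed on both sides by $x^2 = y^2 = z^2 = 0$. The paper's approach buys brevity and reuses the Heisenberg symmetry already set up in the surrounding sections, but gives less insight into the mechanism.
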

\begin{proof}
Using the Heisenberg action, it is enough to prove this for $x$. Then it is an easy computer calculation with for example MAGMA.
\end{proof}
It is therefore enough to compute the Hilbert series of
$$
\mathcal{A}=\mathcal{C}/(g_0) = \C \langle x,y,z \rangle /(x^2,y^2,z^2,xyz,yzx,zxy).
$$
This algebra has the advantage that 2 words $w$ and $w'$ can only be equal to each other if and only if $w = w' = 0$. From this it follows that each $\mathcal{A}_n$ has a unique monomial basis $B_n$ on which $e_1$ acts by a permutation of order 3, without fixed points. This means that
\begin{align*}
\# \{ w \in B_n | w \text{  begins with } x \} &= \# \{w \in B_n | w \text{ begins with } y \} \\ &= \# \{w \in B_n | w \text{ begins with } z \}.
\end{align*}
\par We know from Lemma \ref{lem:Hil} that the Hilbert series starts with the terms
$$
H_\mathcal{A}(t)= 1 + 3t + 6t^2 + 9t^3 + 15t^4 + \ldots
$$
\begin{theorem}
Let $f(t)=H_\mathcal{A}(t) = \sum_{n=0}^\infty a_n t^n$ be the Hilbert series of $H_\mathcal{A}(t)$ with $a_n = \dim \mathcal{A}_n$. Then the coefficients of $f(t)$ fulfil the recurrence relation $a_n = 2 a_{n-1} - a_{n-3}$ for $n \geq 4$.
\end{theorem}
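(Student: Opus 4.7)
The plan is to exploit the monomial nature of $\mathcal{A}$. Since the defining ideal is monomial, the set $B_n$ of length-$n$ words in $\{x,y,z\}$ that contain no subword in $\{x^2,y^2,z^2,xyz,yzx,zxy\}$ is a basis of $\mathcal{A}_n$, so $a_n=|B_n|$. For $n\geq 2$ I will stratify $B_n$ by its last two letters. The six admissible ordered pairs split into two $H_3$-orbits: the positive pairs $\{xy,yz,zx\}$ and the negative pairs $\{yx,zy,xz\}$. Let $P_n$ and $N_n$ denote the number of elements of $B_n$ ending in a positive, respectively negative, pair, so $a_n=P_n+N_n$.

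The substantive step is an extension count. A word ending in the positive pair $xy$ admits exactly one continuation: appending $y$ or $z$ produces the forbidden $y^2$ or $xyz$, so only $x$ is legal, giving the negative pair $yx$. A word ending in the negative pair $yx$ admits exactly two continuations: $y$, producing $yxy$, which is free of forbidden subwords and ends in the positive pair $xy$, and $z$, producing $yxz$, which contains none of $xyz,yzx,zxy$ and ends in the negative pair $xz$. The cyclic $H_3$-symmetry $x\to y\to z\to x$ handles the remaining representatives verbatim. This yields $P_n=N_{n-1}$ and $N_n=P_{n-1}+N_{n-1}=a_{n-1}$ for $n\geq 3$, whence
$$
a_n = P_n+N_n = N_{n-1}+a_{n-1} = a_{n-2}+a_{n-1} \quad (n\geq 3),
$$
using $N_{n-1}=a_{n-2}$, which holds as soon as $n-1\geq 2$.

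With the Fibonacci-type recurrence $a_n=a_{n-1}+a_{n-2}$ in hand for $n\geq 3$, the claim follows by one substitution: for $n\geq 4$ the identity $a_{n-1}=a_{n-2}+a_{n-3}$ gives $a_{n-2}=a_{n-1}-a_{n-3}$, so $a_n = a_{n-1}+a_{n-2} = 2a_{n-1}-a_{n-3}$. The only genuinely combinatorial step is the extension count; everything else is trivial algebra. The boundary $n\geq 4$ in the statement precisely reflects the two nested applications of the Fibonacci recurrence that are needed, and the small discrepancy $a_3-2a_2+a_0=-2\neq 0$ is exactly the correction term in the expansion $(1-2t+t^3)H_\mathcal{A}(t)=1+t-2t^3$ that is implicit in the approach.
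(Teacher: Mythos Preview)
Your proof is correct and takes a genuinely different route from the paper's. The paper analyzes the left-multiplication map $f_x\colon \mathcal{A}_{n-1}\to\mathcal{A}_n$, $w\mapsto xw$, uses the $H_3$-symmetry to write $a_n=3\dim\im(f_x)$, and computes $\ker(f_x)$ as the span of words beginning with $x$ or with $yz$; a second kernel computation for $f_{yz}$ then gives $a_n=2a_{n-1}-a_{n-3}$ directly, and the Fibonacci relation $a_n=a_{n-1}+a_{n-2}$ is obtained afterwards as a corollary. You instead run a transfer-matrix argument on the \emph{right}: stratifying $B_n$ by the $H_3$-orbit of its last two letters (positive $\{xy,yz,zx\}$ versus negative $\{yx,zy,xz\}$) and counting one-letter extensions gives the Fibonacci recurrence first, from which you then deduce the stated three-term recurrence. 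Your approach is arguably more transparent---it makes the Fibonacci behaviour visible from the outset and packages the symmetry as a simple orbit dichotomy---while the paper's left-multiplication argument lands on the exact form of the theorem without passing through the auxiliary $P_n,N_n$ counts.

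One small point: you invoke $N_{n-1}=a_{n-2}$ ``as soon as $n-1\geq 2$'', but your derivation $N_m=P_{m-1}+N_{m-1}=a_{m-1}$ only applies for $m\geq 3$, since it presupposes that length-$(m-1)$ words already have a last two-letter type. The missing case $N_2=3=a_1$ is immediate by inspection (the three negative pairs $yx,zy,xz$ are exactly the negative-ending words of length $2$), so this is a cosmetic gap, not a substantive one.
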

\begin{proof}
For $n=4$, we are done by Lemma \ref{lem:Hil}. Let $n \geq 5$. Let
$$
\xymatrix{\mathcal{A}_{n-1} \ar[r]^-{f_x} & \mathcal{A}_n}
$$
be the linear map defined by $w \mapsto xw$. It then follows that $a_n = 3 \dim \im(f_x)$. So we need to calculate $\ker(f_x)$. Due to the relations, $\ker(f_x)$ is the subspace of $\mathcal{A}_{n-1}$ spanned by the words beginning with $x$ or $yz$. The space spanned by the words beginning with $x$ has dimension $\frac{a_{n-1}}{3}$. The space spanned by the words beginning with $yz$ is the image of the map 
$$
\xymatrix{\mathcal{A}_{n-3} \ar[r]^-{f_{yz}} & \mathcal{A}_{n-1}}.
$$
The kernel of $f_{yz}$ is the subspace spanned by the monomials in $\mathcal{A}_{n-3}$ starting with $x$ or $z$. So $\dim \im(f_{yz}) = a_{n-3} - \frac{2a_{n-3}}{3} = \frac{a_{n-3}}{3}$. This gives
\begin{align*}
a_n &= 3 \dim \im(f_x) \\
    &=3 (a_{n-1} - \dim \ker(f_x))\\
    &=3 (a_{n-1} - \left(\frac{a_{n-1}}{3} +\frac{a_{n-3}}{3}\right)\\
    &=2 a_{n-1}- a_{n-3}.
\end{align*}
\end{proof}
\begin{corollary}
For $n\geq 3$, we have the recurrence relation $a_n = a_{n-1} + a_{n-2}$
\end{corollary}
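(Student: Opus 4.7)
The plan is to deduce the Fibonacci-style recurrence $a_n = a_{n-1}+a_{n-2}$ directly from the three-term recurrence $a_n = 2a_{n-1}-a_{n-3}$ proved in the preceding theorem, together with the initial values coming from Lemma \ref{lem:Hil}.

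First I would establish the base of the induction. The Hilbert series begins $1+3t+6t^2+9t^3+15t^4+\dots$, so $a_3 = 9 = 6+3 = a_2+a_1$ and $a_4 = 15 = 9+6 = a_3+a_2$. These two equalities are immediate from the computation already recorded before the theorem (for $n=3$ it is a direct count in $\mathcal{A}_3$, and for $n=4$ it is exactly Lemma \ref{lem:Hil}).

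For the inductive step, suppose $n\geq 5$ and that $a_{n-1}=a_{n-2}+a_{n-3}$ holds. Apply the theorem's recurrence and substitute:
\[
a_n \;=\; 2a_{n-1}-a_{n-3} \;=\; 2(a_{n-2}+a_{n-3})-a_{n-3} \;=\; 2a_{n-2}+a_{n-3} \;=\; a_{n-2}+(a_{n-2}+a_{n-3}) \;=\; a_{n-1}+a_{n-2}.
\]
This closes the induction.

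There is no real obstacle here; the statement is essentially a linear-algebra consequence of the previous theorem. The only subtlety worth flagging is that the three-term recurrence in the theorem holds for $n\geq 4$, so one genuinely must verify both $n=3$ and $n=4$ by hand before starting the induction, since the step from $n=4$ uses $a_3$ and the step from $n=5$ uses $a_4$. Once these two base values are in place, the passage from the three-term to the two-term recurrence is purely formal.
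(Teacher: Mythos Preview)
Your proof is correct and follows the same idea as the paper: verify the base case and then use the three-term recurrence $a_n=2a_{n-1}-a_{n-3}$ together with the inductive hypothesis $a_{n-1}=a_{n-2}+a_{n-3}$ to obtain $a_n=a_{n-1}+a_{n-2}$. The only difference is that the paper checks just $n=3$ and runs the induction from $n\ge 4$ (which already suffices, since the theorem's recurrence is available there and the hypothesis at $n-1=3$ is the base case); your extra verification of $n=4$ is harmless but not actually needed.
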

\begin{proof}
$n=3$ is trivial. By induction, we may assume that $a_{n-1} = a_{n-2} + a_{n-3}$. But then we get
$$
a_{n} = 2 a_{n-1}- a_{n-3} = a_{n-1} + (a_{n-2} + a_{n-3})- a_{n-3} = a_{n-1} + a_{n-2}.
$$
\end{proof}
\begin{corollary}
$$
H_\mathcal{A}(t) = \frac{3}{1-(t+t^2)} - 2
$$
\end{corollary}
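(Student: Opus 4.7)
The plan is to use the linear recurrence $a_n = a_{n-1} + a_{n-2}$ (valid for $n\geq 3$) from the preceding corollary together with the explicit initial values $a_0=1$, $a_1=3$, $a_2=6$ (which follow from Lemma \ref{lem:Hil} and the first few terms already computed). From these it is a standard generating-function manipulation to obtain the claimed closed form.

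Concretely, I would multiply the power series $f(t)=\sum_{n\geq 0}a_n t^n$ by $1-t-t^2$ and collect terms:
\begin{equation*}
(1-t-t^2)f(t) \;=\; \sum_{n\geq 0}(a_n-a_{n-1}-a_{n-2})\,t^n,
\end{equation*}
with the convention $a_{-1}=a_{-2}=0$. By the recurrence, every coefficient with $n\geq 3$ vanishes, so the right-hand side reduces to
\begin{equation*}
a_0 + (a_1-a_0)\,t + (a_2-a_1-a_0)\,t^2 \;=\; 1+2t+2t^2.
\end{equation*}
Dividing through gives $f(t)=(1+2t+2t^2)/(1-t-t^2)$.

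To put this in the stated form, I would then write
\begin{equation*}
\frac{3}{1-(t+t^2)}-2 \;=\; \frac{3-2(1-t-t^2)}{1-t-t^2} \;=\; \frac{1+2t+2t^2}{1-t-t^2},
\end{equation*}
which matches $f(t)$ exactly. There is no real obstacle here: the only thing one must be careful about is that the recurrence $a_n=a_{n-1}+a_{n-2}$ only starts at $n=3$, so the numerator picks up the ``correction'' terms $1+2t+2t^2$ rather than simply $1$, and this is precisely the reason for the $-2$ constant in the stated formula.
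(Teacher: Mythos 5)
Your proof is correct, and since the paper states this corollary without proof, your argument is exactly the standard computation the author leaves implicit: the recurrence $a_n=a_{n-1}+a_{n-2}$ for $n\geq 3$ plus the initial values $a_0=1$, $a_1=3$, $a_2=6$ force $(1-t-t^2)f(t)=1+2t+2t^2$, which equals $\frac{3}{1-(t+t^2)}-2$. No gaps; the care you take with the $n\geq 3$ starting point of the recurrence is precisely the right point to be careful about.
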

Unfortunately, this is not exactly the Fibonacci sequence. However, if we take $\mathcal{B}= \mathcal{A}^{e_1}$, then we get
$$
b_n = \dim \mathcal{B}_n = F_n
$$
with $F_n$ the $n$th Fibonacci number starting from $F_0 = 1, F_1 = 1$.


\begin{thebibliography}{10}
\bibitem{abdelgadir2014compact}
Tarig Abdelgadir, Shinnosuke Okawa and Kazushi Ueda, {\it Compact moduli of noncommutative projective planes}, {\tt http://arxiv.org/abs/1411.7770}

\bibitem{ASregular}
Michael Artin and William F. Schelter, {\it Graded algebras of global dimension 3}, in {\it Advances in Mathematics}, {\bf 66}, 2, 171-216, Elsevier (1987)

\bibitem{ATV1}
Michael Artin, John Tate and Michel Van den Bergh, {\it Some algebras associated to automorphisms of elliptic curves}, in {\it The Grothendieck Festschrift I}, Progress in Math. {\bf 86}, 33-85, Birkh\"auser Boston (1990)

\bibitem{ATV2}
Michael Artin, John Tate and Michel Van den Bergh {\it Modules over regular algebras of dimension 3}, Inventiones mathematicae {\bf 106}, 1, 335-388, Springer (1991)



\bibitem{DeLaet}
Kevin De Laet, {\it Graded Clifford algebras of global dimension $p$ with an action of $H_p$} {\tt http://arxiv.org/abs/1401.7800}

\bibitem{DeLaet2}
Kevin De Laet, {\it Character series and Sklyanin algebras at points of order 2} {\tt http://arxiv.org/abs/1412.7001}

\bibitem{DeLaetLeBruyn}
Kevin De Laet and Lieven Le Bruyn, {\it The geometry of representations of 3-dimensional Sklyanin algebras}, {\tt http://arxiv.org/abs/1405.1158}






\bibitem{ZhangJing}
Naihuan Jing and James J Zhang, {\it On the trace of graded automorphisms}, Journal of Algebra {\bf 189}, 2, 353-376, Elsevier (1997) 






\bibitem{smithdegenerate}
S. Paul Smith, {\it “Degenerate” 3-dimensional Sklyanin algebras are monomial algebras}, in {\it Journal of Algebra}, {\bf 358}, 74-86, Elsevier (2012)

\bibitem{smith1994center}
S. Paul Smith, {\it The center of the 3-dimensional and 4-dimensional Sklyanin algebras}, in {\it K-theory}, {\bf 8}, 1, 19-63, Springer (1994)






\bibitem{DegenerateWalton}
Chelsea Walton,{\it Degenerate Sklyanin algebras and generalized twisted homogeneous coordinate rings}, in {\it Journal of Algebra}, {\bf 322}, 7,2508-2527, Elsevier (2009)

\bibitem{Walton}
Chelsea Walton, {\it Representation theory of three-dimensional Sklyanin algebras}, in {\it Nuclear Physics B}, {\bf 860}, 1, 167-185, Elsevier (2012)


\end{thebibliography}
\end{document}